\newtheorem{theorem}{Theorem}
\newtheorem{conj}{Conjecture}
\newtheorem{proposition}[theorem]{Proposition}
\newtheorem{corollary}[theorem]{Corollary}
\newtheorem{claim}[theorem]{Claim}
\newtheorem{lemma}[theorem]{Lemma}
\newtheorem{remark}[theorem]{Remark}
\newtheorem{example}[theorem]{Example}
\newtheorem{question}[theorem]{Question}
\newtheorem{fact}[theorem]{Fact}
\newtheorem*{theorem-non}{Theorem}
\newtheorem*{factt}{Fact}
\DeclareMathOperator{\cha}{char}
\DeclareMathOperator{\Sym}{Sym}
\DeclareMathOperator{\norm}{norm}
\DeclareMathOperator{\Bl}{Bl}
\DeclareMathOperator{\Pic}{Pic}
\DeclareMathOperator{\Ker}{Ker}
\DeclareMathOperator{\Supp}{Supp}
\DeclareMathOperator{\rk}{rank}
\DeclareMathOperator{\Amp}{Amp}
\DeclareMathOperator{\ord}{ord}
\DeclareMathOperator{\Jac}{Jac}
\author{Federico Buonerba}
\address{\tiny{Federico Buonerba\newline Courant Institute of Mathematical Sciences,
 New York University, 
 251 Mercer Street, 
 New York, NY 10012, USA}}
 \email{buonerba@cims.nyu.edu}
\author{Fedor Bogomolov} 
\address {\tiny{Fedor Bogomolov \newline Courant Institute of Mathematical Sciences,
 New York University, 
 251 Mercer Street, 
 New York, NY 10012, USA\newline
 National Research University Higher School of Economics, Russian Federation,
AG Laboratory, HSE, 7 Vavilova str., Moscow, Russia, 117312}}
\email{bogomolo@cims.nyu.edu}
\title{Dominant classes of projective varieties}
\begin{document}
\begin{abstract}

We give evidence for a uniformization-type conjecture, that any algebraic variety can be altered
into a variety endowed with a tower of smooth
fibrations of relative dimension one. 

\end{abstract}

\maketitle

%\section{}
%\subsection{}
\markboth{F.Buonerba \& F.Bogomolov}{Dominant classes of projective varieties}

The problem of constructing a resolution of singularities of projective varieties is one of the
most fundamental obstructions to our understanding of their analytic, arithmetic and geometric
properties. A tremendous amount of work has shed light on the this problem, yet in its full generality
it is still wide open over fields of positive characteristic.
A cornerstone result, albeit conjecturally not optimal, is de Jong's \cite{2} 3.1, stating that any variety
can be altered into a smooth projective one. Allowing alterations, other than birational modifications,
comes along with a profusion of new natural questions, in the spirit of: which further properties
can we require, for a class of smooth projective varieties, to be dominant? 
Recall from \cite{1} that a class $\mathscr C(k,n)$ of $n$-dimensional projective varieties over a field $k$
is dominant if for every projective $n$-dimensional variety $X$, there exists $Y\in \mathscr C(k,n)$ and a surjective
$k$-morphism $Y\to X$. In this terminology, de Jong's
result says that smooth projective varieties form a dominant class over any
field and in any dimension. Constructing minimal classes of dominant varieties is a problem that attracted
attention, and a satisfactory answer is still unknown even in the case of curves over fields
that are finitely generated over their prime subfield. Some results, questions and speculations in this direction
can be found in \cite {4},\cite{8},\cite{5}.
In this paper we give evidence for the following conjecture:
\begin{conj}[\cite{1}]\label{conj}
For any field $k$ and positive integer $n$, the class of $n$-dimensional smooth projective varieties
$X$, endowed with a tower of smooth fibrations $$X\to X_1\to...\to X_n$$ with $\dim X_i=n-i$, is dominant.
\end{conj}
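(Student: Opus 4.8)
We outline a strategy for Conjecture \ref{conj}, by induction on $n$. The case $n=1$ is classical (normalisation followed by resolution of the finitely many singular points, or simply \cite{2} in dimension one), a smooth projective curve being by convention a smooth fibration of relative dimension one over $\Spec k$. Assume the statement in all dimensions $<n$ and fix a projective variety $X$ of dimension $n\ge 2$; by \cite{2} we may alter $X$ to a smooth projective variety, so assume $X$ smooth projective.

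\emph{Step 1: reduction to one fibration.} It suffices to produce an alteration $a\colon X'\to X$ with $X'$ smooth projective together with a smooth projective morphism $p\colon X'\to B$ whose fibres are geometrically connected curves, $B$ smooth projective of dimension $n-1$. Indeed, applying the inductive hypothesis to $B$ gives a surjection $b\colon B'\to B$ with $B'$ smooth projective and a tower
\[ B'=B'_0\to B'_1\to\cdots\to B'_{n-1},\qquad \dim B'_i=n-1-i, \]
of smooth fibrations of relative dimension one. Set $Y:=X'\times_B B'$. As $p$ is smooth, so is $Y\to B'$; hence $Y$ is smooth over the smooth variety $B'$, in particular smooth projective, and $Y$ is irreducible because $p$ has geometrically connected (indeed geometrically irreducible) fibres and $B'$ is irreducible. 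The composite $Y\to X'\xrightarrow{a}X$ is surjective, and prepending $Y\to B'$ to the tower of $B'$ exhibits $Y$ as an $n$-dimensional member of the class of Conjecture \ref{conj}. The point that makes the induction work is the elementary fact that smoothness of a morphism is stable under arbitrary base change: one only ever needs to gain a \emph{single} smooth curve fibration.

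\emph{Step 2: a semistable curve fibration.} Take any dominant rational map $X\dashrightarrow\mathbb P^{n-1}$ of relative dimension one — for instance a general linear projection after a projective embedding — resolve its indeterminacy, and apply \cite{2} again to get an alteration $X_1\to X$ with $X_1$ smooth projective and a morphism $f_1\colon X_1\to B_1$ of relative dimension one, $B_1$ smooth projective of dimension $n-1$. De Jong's alteration theorem for families of curves \cite{2} then yields an alteration $B_2\to B_1$ and, after a modification of the pulled-back family, a \emph{semistable} (at worst nodal fibres) curve fibration $f_2\colon X_2\to B_2$ with $X_2,B_2$ smooth projective and $X_2\to X$ surjective. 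If the fibre genus is $0$ then, after an \'etale alteration of $B_2$, $f_2$ is a $\mathbb P^1$-bundle and Step 1 finishes the argument; so assume the fibre genus is at least one, and (altering $B_2$ further) that the singular fibres lie over a normal crossings divisor $\Delta\subset B_2$.

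\emph{Step 3: from semistable to smooth — the main obstacle.} By Step 1 it is enough to dominate $X_2$, via a generically finite surjection, by the total space of a curve fibration whose \emph{source} fibres are smooth over a smooth projective base; the mere presence of singular \emph{image} curves in $X_2$ is not an obstruction. The plan is a Kodaira--Parshin type construction: after a further, harmless alteration of $B_2$ (that base being again handled by the inductive hypothesis through Step 1), arrange that $f_2$ carries several disjoint sections in its smooth locus and, after pulling back along a multiplication-by-$m$ isogeny of the relative Jacobian, that a prescribed combination of these sections becomes divisible by $m$ in $\Pic(X_2)$; the associated degree-$m$ cyclic cover of $X_2$, branched along a divisor \'etale over $B_2$, is a new curve fibration whose total space dominates $X_2$ and whose fibres over the smooth locus of $f_2$ are smooth. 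Over $\Delta$ one is left with a controlled nodal degeneration, which must then be removed by a further alteration adapted to the local monodromy — this is where the local model $xy=t$ of a node has to be traded, after the appropriate fibrewise cover and normalisation, for a family with smooth special fibre. Making this simplification genuine, and proving that iterating it terminates — the natural bookkeeping being a descending induction on $\dim\Delta$ together with the local monodromy, all the while keeping the successive bases smooth and projective — is the heart of the matter and the main difficulty; everything else is formal. Since only de Jong's alteration theorems \cite{2} and standard cyclic-cover constructions enter, the argument is insensitive to $\cha k$. Finally, Step 3 may be recast as the assertion that $X$ is dominated by the universal curve over a suitable smooth projective subvariety of a moduli space of curves with level structure which itself carries a tower as in Conjecture \ref{conj}.
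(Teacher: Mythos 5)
This statement is a \emph{conjecture} in the paper, attributed to \cite{1}; the paper offers no proof of it, only evidence in special cases (Theorem~1, and the later propositions on surfaces and Kodaira fibrations). So there is no ``paper's own proof'' to compare against. What you have written is, by your own admission, an outline with a gap at the crucial step, so it cannot be graded as a proof either.

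That said, your Step 1 is a correct and worthwhile observation: smoothness is stable under base change, a smooth morphism with geometrically connected fibres over an irreducible base has irreducible total space, and therefore Conjecture~\ref{conj} in dimension $n$ is equivalent to ``Conjecture~\ref{conj} in dimension $n-1$ together with: every $n$-fold admits an alteration carrying a single smooth, geometrically connected curve fibration onto a smooth projective $(n-1)$-fold.'' This is precisely the reduction that makes the paper's focus --- constructing one smooth curve fibration dominating a given surface or threefold --- the right thing to look at, even though the paper never states the reduction explicitly. Step 2 (semistable reduction via de Jong) is also standard.

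The genuine gap is Step 3, and you should be aware that it is more than a bookkeeping problem. Around a component of $\Delta$ the semistable family $f_2\colon X_2\to B_2$ has local monodromy a Dehn twist along the vanishing cycle, an infinite-order element of the mapping class group. A Kodaira--Parshin cyclic cover of the fibres, branched along a multisection lying in the smooth locus of $f_2$, \emph{lifts} that Dehn twist to a product of Dehn twists on the covering curve; it does not kill it. Consequently the new family is again singular over $\Delta$, and no finite base change $t\mapsto t^m$ helps either, since replacing $xy=t$ by $xy=t^m$ and normalising merely inserts a chain of rational curves in the special fibre (the geometric genus of a fibre jumps down at a node, so a family of nodal curves cannot specialise to a smooth one of the same genus). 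Kodaira's original construction evades this because his base is a fixed curve of genus $\ge 2$ and the branch points move without ever colliding, so there is no degeneration to cure; your situation forces a degeneration by construction. Unless you have an idea for discarding or re-routing the monodromy that I am not seeing, the plan as stated cannot close; and this obstruction is exactly why the statement remains a conjecture rather than a theorem in the paper. You would do better to aim your energy at the surface case ($n=2$), where the paper's Proposition~\ref{strategy} and Question~\ref{quest} isolate the precise modular difficulty (non-ampleness of the Deligne--Mumford boundary restricted to the image of the Severi stratum), and where partial results via Brieskorn--Tyurina (Theorem~1(i), for $n=3$) or finiteness of $\Pic^0$ over finite fields (Theorem~1(ii)) show what \emph{can} currently be done.
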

Section \ref{one} will be devoted to the proof of:
\begin{theorem}\label{thmmm}
The following classes $\mathscr C(k,n)$ are dominant:
\begin{itemize}
\item[(i)] For $n=3$, smooth threefolds with a smooth connected morphism onto a smooth curve.  
\item[(ii)] For any $n$ and $k$ a finite field, projective varieties admitting a connected morphism
onto a smooth curve, with only one singular fiber whose singular locus consists of one ordinary double point.
\end{itemize}
\end{theorem}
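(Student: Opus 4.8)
The plan is, in both cases, to begin with de Jong's alteration theorem \cite{2} to reduce to $X$ smooth projective, and then to manufacture the fibration over a curve out of a Lefschetz pencil. Concretely, after re-embedding $X$ by a complete linear system $|\mathcal O_X(m)|$ with $m\gg 0$ (and a further Veronese twist in characteristic $2$), I would take a general pencil of hyperplane sections, so that the total space $\widetilde X=\mathrm{Bl}_B X$ — where $B=X\cap L$ is the base locus cut out by a general codimension-two linear space — carries a morphism $p\colon\widetilde X\to\mathbb P^1$ that is smooth over the complement of finitely many closed points, with a single ordinary double point in the fibre over each of those points, and with smooth connected general fibre (for $\dim X\ge 2$). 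Since $\widetilde X\to X$ is birational and surjective and $\widetilde X$ is smooth projective, the whole problem reduces to disposing of the finitely many nodal fibres, and here the two parts of the theorem diverge.

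For (i) all fibres are surfaces, the monodromy of $p$ around a critical value is the Picard--Lefschetz reflection in the vanishing cycle, which has order $2$, and I would kill it by a quadratic base change. Assuming $\mathrm{char}\,k\ne 2$, choose a smooth projective curve $C$ and a finite map $\phi\colon C\to\mathbb P^1$ tamely ramified of order exactly $2$ over each critical value of $p$ and unramified elsewhere — for instance a double cover of $\mathbb P^1$ branched along the reduced discriminant, plus one extra branch point if its degree is odd — and set $Z=\widetilde X\times_{\mathbb P^1}C$. Away from the preimages of the critical values $Z\to C$ is smooth, while near such a preimage the Morse lemma writes $\widetilde X$ as $\{x^2+y^2+z^2=t\}$ and the substitution $t=s^2$ exhibits on $Z$ an ordinary threefold double point $\{x^2+y^2+z^2=s^2\}$. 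I would then take a small resolution $Y\to Z$ at each of these points — realized projectively by blowing up the closure of one of the two Weil divisors through the node — and the key computation is that the small resolution replaces the conical surface fibre by its minimal resolution, a smooth irreducible surface. Then $Y$ is a smooth projective threefold, $Y\to C$ is a smooth connected morphism onto a smooth curve, and $Y\to Z\to\widetilde X\to X$ is surjective. The main obstacle here is exactly this local verification, together with the bookkeeping of projectivity and of descending the choice of small resolution to the residue fields of the critical points, and an ad hoc treatment of characteristic $2$; I expect it to be routine, but it is the crux — and it is special to relative dimension $2$, since ordinary double points admit small resolutions only in dimension $3$.

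For (ii) the nodal fibres cannot in general be made smooth — for even $n$ the monodromy is a transvection of infinite order — so I would instead use the arithmetic of $\mathbb P^1$ over $\mathbb F_q$ to fuse them into a single closed fibre. The discriminant of the pencil is the degree-$r$ divisor on $\mathbb P^1_{\mathbb F_q}$ cut out by the dual variety $X^{\vee}_m$ (an irreducible hypersurface for $m\gg 0$) along the line defining the pencil; I would choose the pencil so that this divisor is \emph{irreducible} over $\mathbb F_q$, i.e. so that the $r$ critical values form one $\mathrm{Gal}(\overline{\mathbb F}_q/\mathbb F_q)$-orbit. Then $p\colon\widetilde X\to\mathbb P^1_{\mathbb F_q}$ is smooth over the complement of a single closed point $\tau$ with $\kappa(\tau)=\mathbb F_{q^r}$, the fibre $\widetilde X_\tau$ has a unique singular point — the closed point carrying the $r$ conjugate geometric nodes — and that point is an ordinary double point in the scheme-theoretic sense; since $p$ is connected and $\widetilde X\to X$ surjective, $\widetilde X$ lies in the class of (ii). The real content, and the place where finiteness of the field is essential, is the existence of such a pencil: I would obtain it by combining the existence of Lefschetz pencils in positive characteristic with a Bertini-type irreducibility theorem over finite fields (in the spirit of Poonen's closed-point sieve and the Charles--Poonen Bertini irreducibility statements), applied to cut $X^{\vee}_m$ by a line in an irreducible $0$-cycle — using that the monodromy of the $r$-point scheme over the space of pencils is the full symmetric group, so that a function-field Chebotarev count produces such a line once the dimension of the space of pencils is large. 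This last step is the main difficulty of part (ii).
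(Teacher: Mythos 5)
Your proof of part (i) follows the same route as the paper's: de Jong's alteration, a Lefschetz pencil, then simultaneous resolution of the nodal fibres after a ramified base change. The one difference is that where the paper simply invokes Artin's algebraic version of the Brieskorn--Tyurina theorem \cite{6}, you unpack it: quadratic base change turns the surface nodes into threefold ordinary double points, which you then resolve by a small (projective) resolution obtained by blowing up a Weil divisor through the node. That is precisely the content of Brieskorn's argument, so the approaches coincide; your account is fine modulo two points you flag yourself, namely characteristic $2$ and the field of definition of the Weil divisor (the small resolution need only exist after a further finite extension of the constant field, since the residual rank-two quadratic form may be anisotropic; both you and the paper implicitly absorb this into the base change $C\to\mathbf P^1$).

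For part (ii), however, you take a genuinely different route and it is worth comparing. The paper does \emph{not} search for a pencil (a line in $\mathbf P_X$) with irreducible discriminant. Instead it slices $\mathbf P_X$ by a general plane $S\cong\mathbf P^2$, puts $D:=X^{\vee}\cap S$, and exploits the finiteness of $\Pic^0_{D/k}$ for $k$ finite: choosing $N>d=\deg D$ killing $\Pic^0_{D/k}$, one gets an isomorphism $\mathscr O_D(NH)\cong\mathscr O_D(Nd\cdot x)$ for a smooth $k$-rational point $x$ of $D$, hence a linear system of degree-$N$ plane curves meeting $D$ \emph{only at} $x$; an elementary local lemma then shows its generic member is smooth at $x$. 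The upshot is a curve $C\subset\mathbf P_X$ with $C\cap X^{\vee}$ supported at a single $k$-rational point which is a smooth point of $X^{\vee}$, so the resulting family over $C^{\norm}$ has exactly one singular fibre, even geometrically, with one ODP. Your approach instead keeps the Lefschetz \emph{pencil} and tries to force the degree-$r$ divisor $L\cap X^{\vee}\subset\mathbf P^1$ to be an irreducible zero-cycle via a Bertini-irreducibility/Chebotarev count over $\mathbf F_q$. Two remarks. First, this is heavier machinery than what is needed, and the sketch leaves real work undone: you need the geometric monodromy of the $r$-point scheme over the space of Lefschetz lines to be $S_r$, an effective Chebotarev/Lang--Weil estimate compatible with the open Lefschetz condition, and a base-extension argument to handle small $q$. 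None of these is unreasonable, but none is proved, whereas the paper's argument is a short, self-contained computation. Second, and more importantly, the two constructions do not produce the same object: your discriminant is a single closed point $\tau$ of degree $r$, which after base change to $\overline{\mathbf F}_q$ becomes $r$ distinct singular geometric fibres. The paper's discriminant is a single $k$-rational point, giving one singular fibre geometrically as well. If the intended reading of ``only one singular fiber'' is the geometric one --- which is what the paper actually constructs --- your pencil-based approach yields a strictly weaker statement and would need to be modified (for instance by replacing the line $L$ with a higher-degree curve in $\mathbf P_X$, which is exactly what the Picard-group trick gives you for free).
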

Let us give a quick indication of the proof. The idea is to construct fibrations
using Lefschetz pencils. In fact, the existence of Lefschetz pencils on smooth
projective varieties, \cite{3} XVII, combined with de Jong's alteration result, \cite{2} 3.1, immediately gives:
\begin{factt} 
For any field $k$ and integer $n\geq 2$, the class of projective varieties admitting a connected morphism
onto $\mathbf P^1_k$, with isolated singular fibers whose singular locus consists of one ordinary double point, is dominant.
\end{factt}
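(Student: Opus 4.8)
The proof is a straightforward concatenation of the two cited theorems, and the plan is to carry it out in three steps. Fix a projective variety $X$ over $k$ of dimension $n\geq 2$; I must exhibit a projective $Y$ in the class together with a surjective $k$-morphism $Y\to X$.

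\emph{Alteration.} First apply de Jong's theorem \cite{2} 3.1 to $X$ to obtain a proper, generically finite, dominant $k$-morphism $a\colon X'\to X$ with $X'$ smooth projective and $\dim X'=n$; since $X$ and $X'$ are both irreducible of dimension $n$ and $a$ is proper and dominant, $a$ is surjective. Because any further modification of $X'$ still maps onto $X$ through $a$, it is enough to equip a suitable modification of $X'$ with the required fibration over $\mathbf P^1_k$.

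\emph{Lefschetz pencil.} Fix a closed embedding $X'\subset\mathbf P^N_k$ and, following \cite{3} XVII, replace it if necessary by its composition with a Veronese embedding of sufficiently large degree, so that $X'$ carries a Lefschetz pencil: a line $\ell$ in the dual space whose axis $A$ (a codimension-two linear subspace of $\mathbf P^N_k$) meets $X'$ transversally — so that $B:=A\cap X'$ is smooth of codimension two in $X'$ — whose generic member is smooth, and whose finitely many singular members each have exactly one ordinary double point, lying outside $B$, as their entire singular locus. The re-embedding is exactly where the hypothesis ``for any field $k$'' is used: over a finite or imperfect field a Lefschetz pencil need not exist for a given embedding, and I expect this — rather than any of the subsequent geometry — to be the only genuine point, one that is already settled by \cite{3} XVII.

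\emph{Resolving the pencil.} Let $Y:=\Bl_B X'$ with blow-down $\pi\colon Y\to X'$. As $X'$ and $B$ are smooth, $Y$ is smooth projective of dimension $n$, and $\pi$ is surjective, so $a\circ\pi\colon Y\to X$ is surjective. The pencil $\ell$ now lifts to an everywhere-defined morphism $f\colon Y\to\mathbf P^1_k$ whose fiber over $t$ is the strict transform of the hyperplane section $H_t\cap X'$. A local calculation in the normal bundle of $B$ shows that $f$ is flat, that every fiber has dimension $n-1$, and that each fiber is smooth in a neighbourhood of the exceptional divisor $\pi^{-1}(B)$; since $\pi$ is an isomorphism away from $B$ and, by construction, the ordinary double point of each singular member of the pencil lies off $B$, it follows that all but finitely many fibers of $f$ are smooth and that each of the remaining fibers has precisely one ordinary double point as its singular locus. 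Finally $f$ has connected fibers: a general member of a Lefschetz pencil is a general hyperplane section of the irreducible variety $X'$, hence geometrically irreducible by Bertini's theorem, so in the Stein factorization $Y\to C\to\mathbf P^1_k$ the finite map $C\to\mathbf P^1_k$ has degree one and $C=\mathbf P^1_k$. Thus $Y$ is projective, carries a connected morphism onto $\mathbf P^1_k$ with isolated singular fibers each singular along a single ordinary double point, and surjects onto $X$, which proves that the class is dominant.
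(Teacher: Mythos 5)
Your proof is correct and follows exactly the route the paper indicates (but does not spell out): alter to smooth projective via de Jong \cite{2} 3.1, take a Lefschetz pencil after a suitable Veronese re-embedding as in \cite{3} XVII, and resolve the base locus; you merely identify the blow-up $\Bl_B X'$ with what the paper later calls the pullback $f^*\mathscr U_X$ of the universal family of hyperplane sections along the pencil, so the two constructions coincide.
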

Statement (i) of the Theorem
is then an immediate consequence of the Brieskorn-Tyurina's simultaneous resolution of surface ordinary double points,
which in fact provides a simultaneous resolution of the fibers of the fibration induced by the Lefschetz pencil.\\
Statement (ii) is more delicate. We are given $X$ a smooth and projective
over a finite field, with $ X^{\vee}$ the dual variety of singular hyperplane sections. We
construct a curve $C$, in the space of hyperplanes of $X$, that intersects $X^{\vee}$ in a single point,
which is a smooth point for both $X^{\vee}$ and $C$.  In order to perform this construction, the
crucial assumption, that $k$ is a finite field, manifests itself in that the Picard group - of degree zero
divisor classes - is always finite for projective curves.
The total space of the induced family of hyperplane sections has a natural fibration onto $C$ with the required
conditions on the fibers, and by construction it
has a surjective morphism onto $X$.\\
In section \ref{two} we focus our attention on surfaces. In this case the dual variety $X^{\vee}$
stratifies according to geometric genus of the generic member, and therefore one might try to
understand the geometry and the modularity of such strata. The general idea is outlined in
Proposition \ref{strategy}.
The geometric structure of the stratification provides satisfactory answers for surfaces of negative Kodaira dimension,
on which complete families of curves with constant geometric genus are easily constructed in Proposition \ref{negkod}. 
Such families induce, upon normalization of the total space, an equigeneric fibration with smooth general member.
Unfortunately the situation becomes complicated in non-negative Kodaira dimensions, where the
method doesn't provide any obvious answer on general K3's and hypersurfaces in $\mathbf P^3$
of degree at least $4$. The difficulty here is that the strata might be nested into each
other as divisors, and it seems hard to insert a complete curve between two consecutive ones.
Therefore we turn our attention to the modularity of the stratification. Less vaguely, each stratum
$S^k$, the closure of the set of curves with geometric genus $g^k$, admits a rational map
 $S^k\dashrightarrow \overline{\mathscr M}_{g^k}$, and one might try to lift complete curves
from $\mathscr M_{g^k}$ to $S^k$.  
Question \ref{quest} summarizes the difficulty with this approach, due essentially to 
the non-genericity of the image of this rational map, whose behaviour near the Deligne-Mumford
boundary is, a priori, arbitrary.\\
In section \ref{three} we continue our discussion by pointing out that the category of surfaces that can be 
dominated by complete families of smooth curves, which conjecturally is everything,
is in fact rather flexible. By this we mean that it is somewhat natural and easy to
create new smooth fibrations out of old ones,
using ideas inspired by Kodaira's construction, \cite{7}, of non-isotrivial smooth fibrations.
First we show in Proposition \ref{catanese}, that any
product of two curves can be dominated by a non-isotrivial smooth fibration. Finally we prove,
in Proposition \ref{fibered},
that any two
smooth fibrations can be dominated simultaneously by a third one. \\
In section \ref{four}, we conclude our discussion by analyzing several aspects of surfaces of general
type that carry an everywhere smooth foliation.
It turns out, Proposition \ref{foliation}, that such a surface must have positive topological index, 
which leads one to think about Kodaira fibrations. Indeed Brunella, in \cite{9},
has set the the foundations of the uniformization theory of such foliated surfaces, by 
establishing that the universal cover has the structure of a disk bundle over a disk.
This, together with Corlette-Simpson's classification, \cite{14}, of Zariski dense Kahler representations
in $PSL_2(\mathbf R)$ leads us to Theorem \ref{BCS}: a smoothly foliated surface of general type
is either a Kodaira fibration, or a foliated subvariety of a polydisk quotient. It is extremely reasonable
that, in the second case, our initial surface is itself a bidisk quotient.\\
We remark that the existence of ball and bidisk quotients can be used to prove, as in Proposition \ref{balls}, that
surfaces with an \'etale cover which is a Stein submanifold in a $3$-dimensional ball form
a dominant class.\\
Switching our attention to smooth foliations on surfaces over fields of characteristic $p>0$,
the situation becomes as different from the complex case, as pleasant. Indeed we have:
\begin{theorem}\label{charpfoliations}
 Let $k$ be a field with $p:=\cha k>2$, and let $X/k$ be an algebraic surface.
 Then there exists a birational modification of $X$, followed by an inseparable cyclic cover, such that
 the resulting surface $Y$ carries a smooth
 $p$-closed foliation.
\end{theorem}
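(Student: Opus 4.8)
The strategy is to produce $Y$ as a purely inseparable cover of degree $p$ of a smooth surface, exploiting the standard dictionary between smooth $p$-closed rank-one foliations on a smooth surface and purely inseparable degree-$p$ morphisms onto smooth surfaces: a rank-one $p$-closed foliation $\mathscr F\subset T_Y$ on a smooth $Y$ is precisely a factorization $Y\to Z\to Y^{(p)}$ of the relative Frobenius into two purely inseparable degree-$p$ morphisms, and $\mathscr F$ is a subbundle --- equivalently, the foliation is everywhere smooth --- exactly when $Z$ is smooth, the foliation then being the subsheaf of vertical vector fields for $Y\to Z$. So it is enough to exhibit a smooth surface $Y$, obtained from $X$ by a birational modification and an inseparable cyclic cover, which is itself an inseparable degree-$p$ cover of a smooth surface. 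First I would replace $X$ by a smooth projective birational model $\tilde X$, which is harmless since surfaces admit resolution of singularities over any field. Since inseparable cyclic covers of $\tilde X$ are generically singular, the plan is to build a mildly singular one and then resolve it compatibly with its foliation.

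Next I would build the cover. Fix a very ample $L$ on $\tilde X$ and a general section $s$ of $L^{\otimes p}$. Because $L^{\otimes p}$ is a $p$-th power, the relation $d(h^p)=0$ forces its transition functions to be $p$-th powers, so the exterior derivative of $s$ is a well-defined global section $ds\in H^0(\tilde X,\Omega^1_{\tilde X}\otimes L^{\otimes p})$, and a short local computation shows that the cyclic cover $Y_0:=\{z^{\otimes p}=\pi^\ast s\}\subset\mathrm{Tot}(L)$ is singular exactly over the zero scheme of $ds$. For general $s$ this is a nonempty finite set of points (a twisted $1$-form with $c_2\neq 0$ in the ample range), the divisor $\mathrm{div}(s)$ is smooth by Bertini, and --- here the hypothesis $p>2$ enters, through the splitting lemma for a nondegenerate quadratic form in two variables --- near each of those points $Y_0$ is analytically isomorphic to $\{z^p=uv\}$, an $A_{p-1}$-type rational double point. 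The projection $\rho\colon Y_0\to\tilde X$ is purely inseparable of degree $p$ and factors the relative Frobenius of $Y_0$; over the smooth locus it endows $Y_0$ with a smooth, $p$-closed foliation $\mathscr F_0$, locally generated by the nowhere-vanishing field $\partial_z$, whose $p$-th iterate is zero (this gives $p$-closedness).

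It remains to resolve the pair $(Y_0,\mathscr F_0)$ into a smooth pair, and this is the step I expect to be the real work. At each bad point an explicit chain of blow-ups resolves the surface singularity; the strict transform of $\mathscr F_0$ may nevertheless still carry isolated singularities, each a reduced (non-degenerate) foliation singularity whose generator has a diagonalizable linear part. Blowing up such a singularity modifies the pair of eigenvalues in a controlled fashion, and one checks that --- again using $p>2$ --- after finitely many steps a suitable integral combination of the eigenvalues becomes divisible by $p$, at which point one further blow-up makes the foliation a subbundle without introducing any new surface singularity; for $p=2$ the quadratic normal form and these numerical coefficients fail to cooperate, which is exactly why that case is excluded. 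One is then left with a smooth surface $Y$, a birational modification of $Y_0$, carrying the saturated strict transform $\mathscr F$ of $\mathscr F_0$; since $p$-closedness is a closed condition that holds on a dense open set, $\mathscr F$ is a smooth $p$-closed foliation on $Y$. Strictly speaking the final modification is applied to the cover $Y_0$ rather than to $X$, but this does not affect the existence statement.
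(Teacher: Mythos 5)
Your construction is genuinely different from the paper's, and the difference is instructive. The paper starts from the foliation $\mathscr F$ defined by a Lefschetz pencil $p\colon Y_0\to\mathbf P^1_k$ on a blow-up $Y_0$ of a smooth model of $X$; the singularities of $\mathscr F$ are exactly the nodes of the singular fibers. It then chooses the branch curve $D$ of the inseparable $p$-cover $r_D\colon Y\to Y_0$ to pass through precisely those nodes (and to have only simple tangencies with the fibers elsewhere), and a short local computation shows that the saturation of $r_D^*\mathscr F$ is already smooth and formally integrable, hence $p$-closed by the normal form of Fact \ref{Mac}. No resolution of foliation singularities is needed, and $Y$ is literally an inseparable cyclic cover of a birational modification of $X$, matching the statement verbatim. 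You instead take a \emph{generic} inseparable $p$-cover $Y_0=\{z^p=s\}$ of a smooth model $\tilde X$, where the resulting surface has $A_{p-1}$ rational double points over the critical points of $s$, equip $Y_0$ with the Frobenius-kernel foliation $\partial_z$, and then propose to resolve the pair $(Y_0,\partial_z)$.

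There are two gaps in the proposal as written. First, the resolution step --- which you yourself flag as ``the real work'' --- is only sketched. Resolving the $A_{p-1}$ surface singularity is standard, but what you actually need is that the \emph{saturated strict transform} of $\partial_z$ eventually becomes a subbundle of the tangent sheaf. In the first chart of the first blow-up of $z^p=uv$ one already finds a dicritical (radial-type) foliation singularity on the exceptional chain; your appeal to ``eigenvalues becoming divisible by $p$ after finitely many blow-ups'' is a claim that has to be verified by explicit computation, and the general theory of reduction of foliation singularities is delicate in positive characteristic (indeed, dicritical behavior and $p$-divisibility are exactly the phenomena that do not behave as in characteristic zero). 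A direct check works for small $p$, but as stated this is an assertion, not a proof. Second, even granting the resolution, your final $Y$ is a birational modification of an inseparable cyclic cover of a birational modification of $X$, i.e.\ $Y\to Y_0\to\tilde X\to X$; the theorem asks for the opposite order, an inseparable cyclic cover of a single birational modification of $X$. You note this yourself, but it is not cosmetic: it is precisely by choosing $D$ adapted to the pencil (through the nodes, simply tangent elsewhere) rather than generically that the paper avoids having to resolve afterwards, and thereby obtains the stated factorization. In short, the dictionary paragraph and the construction of $Y_0$ with its $A_{p-1}$ singularities are sound (and the use of $p>2$ via the Morse/splitting lemma is the right place to use that hypothesis), but the resolution of the foliation needs to be carried out, and the statement you would then have proved is a weaker variant of the theorem.
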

It is worth remarking that the construction of $Y$ is extremely generic, in that we start with a
general Lefschetz 
pencil in $X$, pick a general curve going through the nodes in the pencil, and
finally take an inseparable cyclic cover branched along such curve. The foliation defining
the Lefschetz pencil is shown to pull back, upon saturation, to a smooth and $p$-closed
foliation, via a trivial local computation.\\
Due to the lack of Brunella's theorem in positive characteristic,
the uniformization-type consequences that can be deduced from this
statement, if any, are completely mysterious.

\vspace{0.5cm}

\noindent
\textbf{Acknowledgements} 
We are deeply grateful to Michael McQuillan for his interest in this work,
and for fundamental criticism of an attempt to prove the main conjecture over finite fields.
We would also like to thank Misha Gromov, for several insights and pleasant conversations.
The second author acknowledges that the article
was prepared within the framework of a subsidy granted to the HSE by
the Government of the Russian Federation for the implementation of
the Global Competitiveness Program. The second author was partially
supported by EPSRC programme grant EP/M024830, Simons Fellowship
and Simons travel grant.\\

\section{Proof of theorem 1}
\label{one}
Let $X$ be a smooth projective variety of dimension $n$
over a field $k$, and let $\mathbf P_X$ denote the projective space
 of hyperplane sections of $X$, with universal family 
 $$\begin{CD}
\mathscr U_X@>>> X\\
@VuVV\\
\mathbf P_X
\end{CD}$$\\
Let us prove (i). Assume $n=3$, and consider a Lefschetz 
pencil $f:\mathbf P^1_k\to \mathbf P_X$. 
There is a Zariski-closed subset $S\subset \mathbf P^1_k$ such that $(f^*\mathscr U_X)_s$ has a single ordinary 
double point if and only if $s\in S$. By the Brieskorn-Tyurina's simultaneous
resolution of ordinary double points of surfaces, \cite{6}, there is a ramified cover $C\to \mathbf P^1_k$, and a birational morphism 
$Z\to C\times_{\mathbf P^1_k} f^*\mathscr U_X$ such that the composite $Z\to C$ is a smooth morphism.\qed\\
What follows is the proof of (ii). Assume $k$ is a finite field, 
and denote by $X^{\vee}\subset \mathbf P_X$ the dual variety of singular hyperplanes. It is well known, \cite{3} XVII,
that upon replacing the projective embedding of $X$ with a multiple, $X^{\vee}$ is an
irreducible divisor inside $\mathbf P_X$, whose smooth locus corresponds to hyperplane
sections with a unique singular point, which is an ordinary double point.

\begin{claim}
In order to conclude the proof 
of the theorem, it is enough to find an irreducible curve $C\subset \mathbf P_X$ such that the intersection
$C\cap X^{\vee}$ is supported in a single point, which is smooth for both $C$ and $X^{\vee}$.
\end{claim}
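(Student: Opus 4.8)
The strategy is to build, from the curve $C$, the total space of the associated one-parameter family of hyperplane sections of $X$, and to check that, over a suitable smooth model of $C$, it realises $X$ as demanded by statement (ii) (the reduction to $X$ smooth and projective having already been made). First I would normalise the base: let $\nu\colon\tilde C\to C$ be the normalisation, a smooth projective curve over $k$. Since the set $C\cap X^{\vee}$ is a single point $p$ and $p$ is a smooth point of $C$, the fibre $\nu^{-1}(p)$ is a single reduced point $\tilde p$, and the composite $\tilde C\to C\hookrightarrow\mathbf P_X$ still meets $X^{\vee}$, as a set, only at $\tilde p$. Then I would set $Y:=\mathscr U_X\times_{\mathbf P_X}\tilde C$, with $\pi\colon Y\to\tilde C$ the base change of $u$; this is a projective morphism, and composing $Y\to\mathscr U_X$ with the projection $\mathscr U_X\to X$ produces a $k$-morphism $\rho\colon Y\to X$.

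Next I would read off the fibres of $\pi$. For $s\in\tilde C$ the scheme-theoretic fibre $Y_s$ is, up to the extension of residue fields, the hyperplane section of $X$ corresponding to the image of $s$ in $\mathbf P_X$: if $s\neq\tilde p$ this hyperplane lies outside $X^{\vee}$, so $Y_s$ is smooth, while if $s=\tilde p$ it lies in the smooth locus of $X^{\vee}$, so $Y_{\tilde p}$ has a unique singular point, an ordinary double point — and both properties survive the residue-field extension. Hence $\pi$ has exactly one singular fibre, and its singular locus is a single ordinary double point. It remains to see that $\pi$ is \emph{connected}: its generic fibre is the hyperplane section of $X$ over the function field $k(C)$ of $C$, which is smooth since $C\not\subset X^{\vee}$ and is geometrically connected by the connectivity of the general member of a linear system on a variety of dimension $\geq 2$ — this is precisely the hypothesis $n\geq 2$ of the Fact, and the same input shows that $Y$ is integral of dimension $n$. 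Consequently, in the Stein factorisation $Y\to B\to\tilde C$ the finite morphism $B\to\tilde C$ is birational onto the normal curve $\tilde C$, hence an isomorphism, so $\pi$ itself has geometrically connected fibres.

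Finally, $\rho$ is surjective. The fibre $\rho^{-1}(x)$ over a point $x\in X$ is non-empty if and only if the curve $C$ meets the hyperplane $\ell_x\subset\mathbf P_X$ parametrising those hyperplane sections of $X$ that pass through $x$; since an irreducible curve in a projective space meets every hyperplane, this always holds. As $\rho$ is proper with image containing all closed points, it is surjective. Therefore $Y$ belongs to the class of statement (ii) and admits a surjective morphism onto $X$, which is exactly what the Claim asserts.

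I expect the genuine difficulty to lie not in this reduction but in the hypothesis it isolates — the construction of the curve $C$ — and it is there that the finiteness of $k$, through the finiteness of the degree-zero Picard group of a projective curve, will enter decisively. Inside the present argument the only delicate point is the geometric connectedness of the generic hyperplane section over the non-closed field $k(C)$, which, as indicated above, is supplied by the standard connectedness statement for general members of linear systems as soon as $\dim X\geq 2$.
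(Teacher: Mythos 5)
Your construction — base-changing the universal family $\mathscr U_X$ to the normalisation of $C$ and reading off the fibre structure — is exactly the paper's proof, which simply forms $f^*\mathscr U_X\times_C C^{\norm}\to C^{\norm}$ and notes it has a unique singular fibre with a single ordinary double point. You have merely spelled out the details the paper leaves implicit (geometric connectedness of fibres via Stein factorisation, surjectivity of $Y\to X$ via an irreducible curve in $\mathbf P_X$ meeting every hyperplane), all correctly.
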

\begin{proof}
Let $f:C\to \mathbf P_X$ be such curve, 
and let $u_C: f^*\mathscr U_X\to C$. $u_C$ has a unique
singular fiber, whose singular locus is a single ordinary double point, 
lying over a smooth point of $C$. Therefore the induced fibration 
$f^*\mathscr U_X\times_C C^{\norm}\to C^{\norm}$
is the required one.
\end{proof}
The rest of the proof will be devoted to the construction of such curve $C$.
Let $S\xrightarrow{\sim} \mathbf P^2_k$
be a general linear plane inside $\mathbf P_X$, intersecting the divisor $X^{\vee}$
along an irreducible, reduced curve $D$. 
Denote by $d=\deg_S(D)$. Recall the well known
\begin{fact}\label{ff}
Since $k$ is finite, the group $\Pic^0_{D/k}$ of degree zero divisor classes on $D$ is finite.
\end{fact}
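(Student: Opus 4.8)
The statement is classical; here is the argument I would give. Write $k=\mathbb F_q$, and let $\nu\colon\widetilde D\to D$ be the normalization. The plan is to split the problem into a ``global'' part controlled by the smooth projective curve $\widetilde D$ and a ``local'' part controlled by the finitely many singularities of $D$, and to observe that both are finite precisely because $k$ is finite.

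For the global part: $\widetilde D$ is a smooth projective integral curve over $k$, so its Jacobian $\Jac_{\widetilde D/k}=\Pic^0_{\widetilde D/k}$ is an abelian variety over $k$, in particular a $k$-scheme of finite type, and any scheme of finite type over a finite field has only finitely many $k$-rational points. Since $H^1(\mathrm{Gal}(\bar k/k),\mathbb G_m)=0$ by Hilbert's Theorem~90, the natural map $\Pic^0(\widetilde D)\hookrightarrow \Jac_{\widetilde D/k}(k)$ is injective, so $\Pic^0(\widetilde D)$ is finite.

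For the local part, I would use the exact sequence of Zariski sheaves on $D$
$$1\longrightarrow \mathcal O_D^\times\longrightarrow \nu_\ast\mathcal O_{\widetilde D}^\times\longrightarrow \mathcal Q\longrightarrow 1,$$
in which $\mathcal Q$ is a skyscraper sheaf supported on the finite set $D_{\mathrm{sing}}$. Passing to cohomology — using $R^1\nu_\ast\mathcal O_{\widetilde D}^\times=0$ for the finite affine morphism $\nu$, and $H^1(D,\mathcal Q)=0$ — yields a surjection $\nu^\ast\colon\Pic(D)\twoheadrightarrow\Pic(\widetilde D)$ whose kernel is a quotient of $H^0(D,\mathcal Q)=\bigoplus_{x\in D_{\mathrm{sing}}}\widetilde{\mathcal O}_x^\times/\mathcal O_{D,x}^\times$, where $\widetilde{\mathcal O}_x=(\nu_\ast\mathcal O_{\widetilde D})_x$ is the normalization of $\mathcal O_{D,x}$. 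As $\mathcal O_{D,x}$ is excellent, $\widetilde{\mathcal O}_x$ is a finite $\mathcal O_{D,x}$-module, so the conductor $\mathfrak c$ is a nonzero ideal with $1+\mathfrak c$ contained in both unit groups; one is thus reduced to comparing the unit groups of the finite rings $\mathcal O_{D,x}/\mathfrak c$ and $\widetilde{\mathcal O}_x/\mathfrak c$, which are finite since $k$ is. Hence $\ker\nu^\ast$ is finite. Because $\nu$ is birational it preserves degrees, so $\nu^\ast$ restricts to a surjection $\Pic^0(D)\twoheadrightarrow\Pic^0(\widetilde D)$ with the same (finite) kernel, and combining with the global part we conclude that $\Pic^0(D)$ is finite.

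Alternatively, in one stroke: for the projective curve $D$ the Picard functor $\Pic_{D/k}$ is representable by a group scheme locally of finite type over $k$ (Grothendieck), its identity component $\Pic^0_{D/k}$ is of finite type, hence — $k$ being finite — has finitely many $k$-points; and since $D$ is a general plane section of the irreducible divisor $X^\vee$, hence geometrically integral, Hilbert~90 exhibits $\Pic^0(D)$ as a subgroup of the finite group $\Pic^0_{D/k}(k)$. There is no serious obstacle here: the only steps requiring (routine) care are the finiteness of the local unit quotients $\widetilde{\mathcal O}_x^\times/\mathcal O_{D,x}^\times$ above and, for the second approach, confirming that a general such $D$ is geometrically integral so that the Picard scheme behaves as expected.
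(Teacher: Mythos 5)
The paper supplies no proof: it labels this ``Recall the well known'' and moves on, so there is nothing on the authors' side to compare against. Your argument is correct and is the standard one. The decomposition via the normalization $\nu\colon\widetilde D\to D$ --- finiteness of $\Jac_{\widetilde D/k}(k)$ for the global part, the conductor square and finiteness of unit groups of Artinian $\mathbb F_q$-algebras for the local part, followed by the observation that $\nu^*$ preserves degree zero so the finite kernel persists on $\Pic^0$ --- proves exactly what is needed; and the alternative via representability of $\Pic_{D/k}$, finite type of its identity component, and Hilbert 90 is equally valid and, given the notation $\Pic^0_{D/k}$ in the statement, probably closer to what the authors have in mind. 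The one point genuinely worth flagging, as you do, is geometric integrality of $D$: over a finite field ``irreducible'' does not imply ``geometrically irreducible,'' and both of your arguments as written implicitly assume it (the Jacobian being an abelian variety, and $H^0(\widetilde D_{\bar k},\mathcal O)=\bar k$ for Hilbert 90). This is harmless --- if $H^0(\widetilde D,\mathcal O_{\widetilde D})=k'$ is a proper finite extension of $k$, one simply runs the global argument over $k'$, which is again finite, and Hilbert 90 applies to $k'/k$ as well --- but it deserves a sentence, particularly since in the paper $D$ is produced as a general plane section of $X^{\vee}$, where over a finite field geometric integrality of the section is itself a (routine, Bertini-type) point to verify.
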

Let $N>d$ be an integer that kills
$\Pic^0_{D/k}$. Denote by $H$ an hyperplane section of $S$
and by $x$ a smooth point of $D$. 
By Fact \ref{ff} there exists an isomorphism of sheaves
 \[ \mathscr O_{D}(NH) \xrightarrow{\sim} \mathscr O_D(Mx)\]
where of course $M=Nd$, inducing a commutative diagram
$$\begin{CD}
0 @>>> \mathscr O_S(NH-D)@>>> \mathscr O_S(NH) @>>> \mathscr O_D(NH) @>>> 0\\
@. @VVV @| @VVV\\
0 @>>> \Ker \gamma @>>> \mathscr O_S(NH) @>\gamma>> \mathscr O_D(Mx) @>>> 0
\end{CD}$$\\
whose vertical arrows are all isomorphisms.
By our choice of $N$ we deduce that $\Ker \gamma$ has non-trivial global sections, and the points
of $\mathbf P(H^0(S,\Ker \gamma))$ correspond to
curves of degree $N$ inside $S$, whose intersection with $D$ is supported on $x$.
All is left to do is to check that the generic member of the linear system
$\mathbf P(H^0(S,\Ker \gamma))$ is smooth at $x$. This is achieved by way of:

\begin{lemma}
Let $G,L\in k[X,Y,Z]$ be homogeneous polynomials of degrees $d$ and $N>d$ respectively.
Assume that they define irreducible curves intersecting only at $x=[0:0:1]$ and that 
the curve defined by $G$ is smooth at $x$. If 
the curve defined by $L$ is singular at $x$, then $F=Z^{N-d}G+L$ satisfies
\begin{itemize}
\item[(i)] the curves defined by $F$ and $G$ meet only in $x$, and
\item[(ii)] the curve defined by $F$ is smooth at $x$.
\end{itemize}
\end{lemma}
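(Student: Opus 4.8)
The plan is to work in the affine chart $Z \neq 0$, dehomogenizing by setting $Z=1$. Write $g(X,Y) = G(X,Y,1)$ and $\ell(X,Y) = L(X,Y,1)$, so that near the origin the curve $\{G=0\}$ is cut out by $g$ and the curve $\{L=0\}$ by $\ell$. The dehomogenization of $F = Z^{N-d}G + L$ is simply $f(X,Y) = g(X,Y) + \ell(X,Y)$, since $Z^{N-d}$ becomes $1$. Now expand both functions as sums of homogeneous forms in $X,Y$: write $g = g_0 + g_1 + g_2 + \cdots$ and $\ell = \ell_0 + \ell_1 + \ell_2 + \cdots$ with $g_j, \ell_j$ the degree-$j$ part. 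The hypotheses translate directly: since $\{G=0\}$ and $\{L=0\}$ pass through $x$, we have $g_0 = \ell_0 = 0$; since $\{G=0\}$ is smooth at $x$, the linear form $g_1$ is nonzero; since $\{L=0\}$ is singular at $x$, we have $\ell_1 = 0$.

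The key computation for (ii) is then immediate: the degree-$1$ part of $f = g + \ell$ is $g_1 + \ell_1 = g_1 + 0 = g_1 \neq 0$, so $\{F=0\}$ is smooth at $x$ with the same tangent line as $\{G=0\}$. This is the ``trivial local computation'' the introduction alludes to; I do not expect any obstacle here.

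For (i) I would argue as follows. Suppose $p$ is a common point of $\{F=0\}$ and $\{G=0\}$. From $F = Z^{N-d}G + L$ and $G(p)=0$ we get $L(p) = 0$, so $p$ lies on $\{L=0\} \cap \{G=0\}$, which by hypothesis is the single point $x$. Hence $\{F=0\} \cap \{G=0\} \subseteq \{x\}$, and since both are projective curves in $\mathbf P^2$ (and $\{F=0\}$ does pass through $x$ by $F(x) = 0^{N-d}G(x) + L(x) = 0$), the intersection is exactly $\{x\}$. The only subtlety to record is that $F$ is genuinely of degree $N$, i.e. that no cancellation of the top-degree terms occurs: since $\deg(Z^{N-d}G) = N > d = \deg L$, the leading form of $F$ equals that of $Z^{N-d}G$, which is nonzero; in particular $\{F=0\}$ is a well-defined plane curve and the set-theoretic argument above applies. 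Putting (i) and (ii) together completes the lemma, and combined with the preceding discussion it shows that the generic member of $\mathbf P(H^0(S, \Ker\gamma))$ — which is of this form $Z^{N-d}G + L$ after a suitable choice of the fixed smooth curve $G$ realizing the generator coming from $\mathscr{O}_D(NH) \xrightarrow{\sim} \mathscr{O}_D(Mx)$ — meets $D$ only at $x$ and is smooth there, finishing the construction of the curve $C$.
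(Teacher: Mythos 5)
Your proof is correct and follows essentially the same route as the paper: dehomogenize in the chart $Z = 1$, observe that $F$ becomes $f = g + \ell$, and conclude (ii) from $\nabla f(0,0) = \nabla g(0,0) \neq 0$ because $\ell$ has no linear part at the origin; for (i) use that on $\{G = 0\}$ the vanishing loci of $F$ and $L$ agree. Two small slips worth fixing, neither of which affects the argument: you wrote ``$F(x) = 0^{N-d}G(x) + L(x)$,'' but at $x = [0:0:1]$ one has $Z = 1$, so the correct display is $F(x) = 1\cdot G(x) + L(x) = 0$ (the conclusion stands because $G(x) = L(x) = 0$); and in your no-cancellation remark you wrote $\deg L = d$, but by hypothesis $\deg L = N$, so the two summands $Z^{N-d}G$ and $L$ have the \emph{same} degree $N$ — the correct reason $F \neq 0$ is that $L$ is irreducible of degree $N > d$ while $Z^{N-d}G$ is visibly reducible, so they cannot be proportional.
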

\begin{proof}
$\{x\}=\Supp((G=0)\cap (L=0))=\Supp((G=0)\cap (F=0))$, which is (i).
In order to prove (ii), observe that $x$ is the origin of the affine space $Z=1$,
so then denoting by $u=X/Z$, $v=Y/Z$ and $g(u,v)=Z^{-d}G(X,Y,Z)$, $l(u,v)=Z^{-N}L(X,Y,Z)$ we 
see that $(F=0)$ is defined, around $x$, by the vanishing
of $f=g+l$. Since $\nabla f(0,0)=\nabla g(0,0)\neq 0$ the proof is complete.
\end{proof}
Consequently the generic member of $\mathbf P(H^0(S,\Ker \gamma))$ is smooth at $x$,
and can be taken to be the curve $C$ we are looking for.\qed

\section{An approach to the conjecture for surfaces}
\label{two}
Let us describe some simple examples of surfaces for
which Conjecture \ref{conj} holds:
\begin{example}
 Minimal models of surfaces of negative Kodaira dimension:
 apart from $\mathbf P^2$, these are $\mathbf P^1$-bundles over
 smooth curves.
\end{example}
\begin{example}
 Some surfaces of Kodaira dimension $0$:
 \begin{itemize}
  \item Abelian varieties: Let $A$ be a $d$-dimensional abelian variety,
  and $f:C\to A$ any non-constant algebraic curve. The sum morphism
  $C^d\to A$ given by $(c_1,..., c_d)\to f(c_1)+...+f(c_d)$ is surjective.
  
  \item Kummer K3 surfaces: Let $C$ be a genus $2$ curve with
  hyperelliptic involution $\iota$. Consider the sequence of
  morphisms $$C\times C\to \Sym^2 C\to \Jac^2(C),\ (c_1,c_2)\to c_1+c_2\to [c_1+c_2]$$
  The graph $\Gamma_{\iota}$ of the hyperelliptic involution is
  projected onto a rational curve by the first morphism, and it is
  contracted by the second. Pulling back the above diagram under a
  degree $16$ cover $m_2:\Jac^2(C)\to \Jac^2(C)$ - obtained by identifying
  $\Jac^2 (C) \xrightarrow{\sim} \Pic(C)$ via $K_C$ - we obtain morphisms 
  $$m_2^*(C\times C)\to m_2^*\Sym^2 C\to m_2^*\Sym^2 C/(-1)=K3(C)$$ 
  Since $m_2$ is \'etale, $m_2^*(C\times C)$ is still a product of curves.
 \end{itemize}

\end{example}

The previous example generalizes as follows:
\begin{proposition}
 Let $k$ be a finite field, and $S_0\subset \mathbf P^2(k)$ any finite set.
 Then there exist curves $C_1,C_2$ and a morphism $C_1\times C_2\to \Bl_{\mathbf P^2}(S_0)$.
 
\end{proposition}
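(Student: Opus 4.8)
\emph{Reduction to a contraction problem.} Write $S_0=\{p_1,\dots,p_r\}$. The surface $\Bl_{\mathbf P^2}(S_0)$ is the blow‑up of the product ideal $\mathcal I_{p_1}\cdots\mathcal I_{p_r}$, equivalently the closure of the graph of the $r$ projections $\pi_i\colon\mathbf P^2\dashrightarrow\mathbf P^1$ away from the $p_i$; hence a morphism to it over $\mathbf P^2$ is the same datum as a morphism to $\mathbf P^2$ along which each $\mathcal I_{p_i}$ becomes invertible, and any such morphism from a proper variety dominating $\mathbf P^2$ is automatically onto $\Bl_{\mathbf P^2}(S_0)$. So the plan is to produce a surjective $\phi\colon C_1\times C_2\to\mathbf P^2$ with each fibre $\phi^{-1}(p_i)$ a Cartier divisor $N_i$. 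If $\widetilde\phi$ is the resulting lift and $E_i$ the exceptional curve over $p_i$, then $(\widetilde\phi^{\,*}E_i)^2=(\deg\phi)\,E_i^2<0$, so each $N_i$ must contain a curve of negative self‑intersection, and the $N_i$ are pairwise disjoint since the $p_i$ are distinct. In particular $C_1\times C_2$ has to carry $r$ pairwise disjoint negative curves defined over $k$.

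\emph{Producing the negative curves.} Take $C_1=C_2=C$ a curve over $k$ carrying a free action of a finite group $G$ of order $\ge r$ — e.g.\ an \'etale $\mathbf Z/r$‑cover of a fixed genus‑$2$ curve, whose existence over $k$ uses that Jacobians over a finite field have many $k$‑rational torsion points. The graphs $\Gamma_\sigma\subset C\times C$, $\sigma\in G$, are then $r$ pairwise disjoint smooth curves with $\Gamma_\sigma^2=2-2g(C)<0$; these will be the $N_i$.

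\emph{Construction of $\phi$.} Choose coordinates so that each $p_i=[x_i:y_i:1]$ lies in the torus $\{XYZ\neq0\}$, with $x_i,y_i\in k^\times$. We look for $\phi=[F_X:F_Y:F_Z]$ given by three sections of a line bundle $L$ on $C\times C$ with $F_X|_{N_i}=x_iF_Z|_{N_i}$ and $F_Y|_{N_i}=y_iF_Z|_{N_i}$ and $F_Z|_{N_i}$ nowhere zero on $N_i$: then $\phi^{-1}(p_i)=N_i$ is a divisor and $\phi$ lifts as desired. The nonvanishing of $F_Z$ on $N_i$ forces $L|_{N_i}\cong\mathcal O_{N_i}$; a Chern‑class computation shows that for a suitable shape of $L$ (built from $pr_i^*K_C$, $\mathcal O(N_1+\dots+N_r)$, a correspondence line bundle, and a twist by an element of $\Pic^0$) every $L|_{N_i}$ has degree $0$, and — the essential use of the finiteness of $k$, exactly as in the proof of Theorem \ref{thmmm}(ii) — the finiteness of each $\Pic^0(N_i)$ lets us twist so that all the $L|_{N_i}$ are \emph{simultaneously} trivial while $L$ stays positive enough away from the $N_i$. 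Given such an $L$, pick $F_Z$ restricting to a trivialisation of each $L|_{N_i}$; the prescribed restrictions of $F_X,F_Y$ then lie in the image of restriction to $N_1+\dots+N_r$, and the large remaining freedom (the kernel of that restriction map) is used to make the common choice base‑point‑free and the resulting $\phi$ surjective onto $\mathbf P^2$.

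\emph{Main obstacle.} The crux is this last step: arranging the three sections so that the contractions land on the \emph{prescribed} points $p_i$ while $\phi$ remains a morphism whose residual pencils near each $N_i$ are base‑point‑free (so that $\phi$ is literally the contraction of $N_i$, nothing worse). This is an interpolation problem on $C\times C$ along the finitely many curves $N_i$, and its solvability — in particular the triviality of all the $L|_{N_i}$ at once — is precisely what the finiteness of the Picard groups of curves over the finite field $k$ provides, in direct analogy with the construction carried out in Theorem \ref{thmmm}(ii).
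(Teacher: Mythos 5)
Your approach is genuinely different from the paper's, and while the reduction and the strategy are sensible, the proof is not completed: you yourself flag the contraction/interpolation step as ``the crux,'' and it is left as a sketch. Let me be concrete about what is missing and then contrast with what the paper actually does.

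The reduction to a contraction problem is fine (universal property of blow-up), and producing disjoint negative curves as graphs $\Gamma_\sigma$ of a free $G$-action on $C$ is standard. The trouble starts with the line bundle $L$. You need simultaneously: (a) $\deg L|_{N_i}=0$ for all $i$; (b) $L|_{N_i}\cong\mathscr O_{N_i}$ for all $i$; (c) the restriction map $H^0(C\times C,L)\to\bigoplus_i H^0(N_i,L|_{N_i})\cong k^{\oplus r}$ is surjective, so that you can prescribe the ratios $x_i,y_i$; (d) the resulting three-section linear system is base-point free; and (e) its image is two-dimensional, hence $\mathbf P^2$. Item (a) imposes the numerical constraint $L\cdot N_j=0$ for all $j$, which (since the $N_j$ are pairwise disjoint) forces the coefficients of all $N_j$ in $L$ to be equal to a common $c$ with $a+b=c(2g(C)-2)$ if $L\equiv a\,\mathrm{pr}_1^*\mathrm{pt}+b\,\mathrm{pr}_2^*\mathrm{pt}+c\sum N_j$; that part can be made to work. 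Item (b) is where you invoke finiteness of $\Pic^0$; the clean version is: since $\Pic^0(N_i)(k)$ is finite, some power $L^{\otimes M}$ restricts trivially to every $N_i$. But then (c) requires $H^1\bigl(L^{\otimes M}\otimes\mathscr O(-\sum N_i)\bigr)=0$, and (d) requires base-point-freeness of the specific three-dimensional subsystem you have chosen (not of $|L^{\otimes M}|$), neither of which is established; base-point-freeness is the real difficulty because you are not free to choose $F_X,F_Y,F_Z$ generically — their restrictions to $\sum N_i$ are prescribed. Finally (e): nothing in the argument rules out the image being a curve or a surface other than $\mathbf P^2$. So the proposal is, at present, a plausible program rather than a proof.

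The paper sidesteps all of this by never solving an interpolation problem on $C\times C$. It first reduces $\Bl_{\mathbf P^2}(S_0)$ to a blow-up $\Bl_{\mathbf P^1\times\mathbf P^1}(S\times S)$, then pulls this back along $(g_1,g_2)\colon E_1\times E_2\to\mathbf P^1\times\mathbf P^1$ for elliptic curves $E_i$ with $\Jac(C)\cong E_1\times E_2$ for a genus-$2$ curve $C$. Over a finite field the finite set $g_1^*S\times g_2^*S\subset E_1\times E_2$ is torsion, so a multiplication map $m_N$ moves it to $\{0\}\times\{0\}$, yielding $m_N^*\Bl_{E_1\times E_2}(0\times 0)\to\Bl_{E_1\times E_2}(g_1^*S\times g_2^*S)$. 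The blow-up at the origin of an abelian surface which is the Jacobian of a genus-$2$ curve is exactly $\Sym^2 C$, which is dominated by $C\times C$; pulling back along $m_N$ and invoking \cite{21}~X.1.7 to recognize $m_N^*(C\times C)$ as a product of curves completes the proof. The crucial use of finiteness of $k$ there is the torsion statement on $E_1\times E_2$, which is elementary, rather than a simultaneous triviality of line-bundle restrictions combined with unproved cohomological positivity. If you want to salvage your route, the steps (c)--(e) above are exactly what must be supplied.
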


\begin{proof}
 Observe that, for any such $S_0$, there exists a finite set $S\subset \mathbf P^1(k)$
 and a morphism $\Bl_{\mathbf P^1\times \mathbf P^1}(S\times S)\to \Bl_{\mathbf P^2}(S_0)$.
 Let $E_1$ be an elliptic curve, and $C$ a curve of genus $2$ with a non-constant
 morphism $C\to E_1$. We can assume wlog that the Jacobian of $C$ is isomorphic
 to $E_1\times E_2$, for some elliptic curve $E_2$. 
 Consider non-constant morphisms $g_i:E_i\to \mathbf P^1$ and
 the resulting 
 $$(g_1,g_2):\Bl_{E_1\times E_2}(g_1^*S\times g_2^*S)\to \Bl_{\mathbf P^1\times \mathbf P^1}(S\times S)$$
 By our assumption on $k$, any finite set of points in $E_1\times E_2$ is torsion, hence
 there exists an integer $N$ and a morphism 
 $$m_N^*\Bl_{E_1\times E_2}(0\times 0)\to \Bl_{E_1\times E_2}(g_1^*S\times g_2^*S)$$
 where $m_N$ is the multiplication by $N$ map on $E_1\times E_2$.
 Finally, composing $$C\times C\to \Sym^2 C\xrightarrow{\sim} \Bl_{E_1\times E_2}(0\times 0)$$
 with the morphisms constructed above, yields 
 $$m_N^*(C\times C)\to m_N^*\Bl_{E_1\times E_2}(0\times 0)\to \Bl_{E_1\times E_2}(g_1^*S\times g_2^*S)
 \to \Bl_{\mathbf P^1\times \mathbf P^1}(S\times S)$$
 In order to conclude, we employ \cite{21} X.1.7 to find curves $C_1, C_2$ and an isomorphism
 $C_1\times C_2\xrightarrow{\sim} m_N^*(C\times C)$.

\end{proof}
Since every Hirzebruch surface is dominated by a blow-up, in
a finite set of points, of $\mathbf P^1\times \mathbf P^1$, we obtain:

\begin{corollary}
 Let $X$ be a Hirzebruch surface over a finite field, and $S\subset X(k)$ any finite set
 of points, then there exist curves $C_1,C_2$ and a morphism $C_1\times C_2\to \Bl_X (S)$.
\end{corollary}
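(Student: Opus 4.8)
The plan is to reduce the statement about an arbitrary Hirzebruch surface $X=\mathbf F_e$ to the case of $\mathbf P^1\times\mathbf P^1$ already handled in the previous Proposition, together with a simple observation on how the choice of a base point set propagates through birational maps of surfaces. The key structural fact I would use is that every Hirzebruch surface $\mathbf F_e$ is obtained from $\mathbf P^1\times\mathbf P^1=\mathbf F_0$ by a finite sequence of elementary transformations (blow up a point, blow down the strict transform of the fiber through it), hence there is a smooth surface $W$ and a diagram $\mathbf P^1\times\mathbf P^1 \leftarrow W \rightarrow \mathbf F_e$ where both arrows are compositions of blow-ups at $k$-rational points. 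I would make this explicit enough to extract the combinatorial input, namely: if $S\subset\mathbf F_e(k)$ is any finite set of rational points, then there is a finite set $T\subset(\mathbf P^1\times\mathbf P^1)(k)$ such that $\Bl_{\mathbf P^1\times\mathbf P^1}(T)$ dominates $\Bl_{\mathbf F_e}(S)$.

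First I would spell out this reduction. Given $S\subset\mathbf F_e(k)$, resolve the birational map $\mathbf F_e\dashrightarrow\mathbf P^1\times\mathbf P^1$ through a surface $W$, and let $W'\to W$ be obtained by further blowing up the (finitely many, and rational, since they lie over rational points and over rational centers) points of $W$ mapping into $S$. Then $W'$ dominates $\Bl_{\mathbf F_e}(S)$ by the universal property of blow-ups, and $W'$ is itself a blow-up of $\mathbf P^1\times\mathbf P^1$ at a finite set $T$ of rational points — here one uses that all the relevant centers are $k$-rational, which is automatic over a finite field once they lie over rational data. Next I would simply invoke the previous Proposition with this $T$: there exist curves $C_1,C_2$ and a morphism $C_1\times C_2\to \Bl_{\mathbf P^1\times\mathbf P^1}(T)$. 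Composing with the domination $\Bl_{\mathbf P^1\times\mathbf P^1}(T)\to\Bl_{\mathbf F_e}(S)$ gives the desired morphism. Since the Proposition's output $C_1\times C_2$ is already a genuine product of curves (via \cite{21} X.1.7), no further bookkeeping is needed at the end.

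The main obstacle I anticipate is the bookkeeping in the reduction step: one must check that all blow-up centers that arise — both those inherent in expressing $\mathbf F_e$ in terms of $\mathbf F_0$, and the extra ones needed to account for $S$ — can be taken to be $k$-rational points, so that the resulting $T$ genuinely consists of rational points and the previous Proposition applies verbatim. Over a finite field this is not a serious difficulty, because the elementary transformations relating Hirzebruch surfaces can be performed at rational points (each $\mathbf F_e$ has plenty of rational points and rational sections), and iterated blow-ups at rational points, followed by blow-downs along rational $(-1)$-curves, keep everything rational; infinitely near points over rational points are themselves rational. The only mild care needed is to make sure that after blowing down one does not lose track of the images of the exceptional divisors one still needs, which is handled by always passing to a common resolution $W'$ dominating both $\Bl_{\mathbf F_e}(S)$ and $\mathbf P^1\times\mathbf P^1$ rather than trying to compare them directly.

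Alternatively, and perhaps more cleanly, one can avoid elementary transformations entirely: blow up $\mathbf F_e$ at one point on the negative section and one point on a fiber to reach a surface that also dominates $\mathbf P^1\times\mathbf P^1$ blown up at a couple of points — but the cleanest phrasing is the one-line reduction above, so I would present the argument in that compressed form and leave the elementary-transformation verification to a sentence citing the standard structure theory of ruled surfaces.
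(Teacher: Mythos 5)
Your approach is exactly the paper's: observe that any Hirzebruch surface (and hence its blow-up along $S$) is dominated by a blow-up of $\mathbf P^1\times\mathbf P^1$ at a finite set of $k$-rational points, then feed that set into the argument of the preceding Proposition, whose proof actually produces a morphism $C_1\times C_2\to \Bl_{\mathbf P^1\times\mathbf P^1}(S\times S)$. The paper compresses the reduction to a single sentence, and your more detailed bookkeeping through a common resolution $W'$ is sound; the one subtlety you gloss over (as does the paper) is that the blow-up presenting $W'$ over $\mathbf P^1\times\mathbf P^1$ may a priori involve infinitely near centers rather than a genuine finite subset of $(\mathbf P^1\times\mathbf P^1)(k)$, so one should either arrange the elementary transformations to avoid this or note that the Proposition's construction extends to such clusters.
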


Turning to a more general discussion, let $k$ be a field, $X/k$ a smooth projective surface, and $H$ an ample divisor
such that $H^1(X,\mathscr O_X(nH))=H^2(X,\mathscr O_X(nH))=0$ for all $n\geq 1$.
We have the linear system $\mathbf P_{X,n}=\mathbf P(H^0(X,\mathscr O_X(nH)))$
of dimension  $$d_n=\dim (\mathbf P_{X,n})=\dim\ H^0(X,\mathscr O_X(nH))-1=nH\cdot (nH-K_X)/2+\chi (\mathscr O_X)-1$$
and its generic member 
is smooth of genus $g_n=1+nH\cdot (nH+K_X)/2$.
Observe that $\mathbf P_{X,n}$ admits a natural stratification 
$$\emptyset=:S_n^{N(n)+1}\subsetneq S_n^{N(n)}\subsetneq ...\subsetneq S_n^0:=\mathbf P_{X,n}$$
by, not necessarily irreducible, closed subvarieties, such that the generic member of
each irreducible component of $S_n^k$, $0\leq k\leq N(n)$, corresponds to a
curve with geometric genus at most
$g_n^k$, and of course $0\leq g_n^{N(n)}<...<g_n^0=g_n$. We will call it the
\textit{Severi stratification}.\\
For $g_n^k\geq 2$
there is rational map
$$p:S_n^k\dashrightarrow \mathscr {\overline M}_{g_n^k}$$
into the Deligne-Mumford compactification of the moduli stack of curves.
The reason to introduce
the Severi stratification is:

\begin{proposition}\label{strategy}
\begin{itemize}
\item[]
\item[(i)] Assume that, for some $0\leq k\leq N(n)$ such that $g_k\geq 2$, there exists a smooth proper curve $C$
and a non-constant morphism $f:C\to S_n^k$ such that the rational map $p\circ f:C\dashrightarrow \mathscr {\overline M}_{g_n^k} $
extends, after a finite cover $C'/C$, to a morphism $C'\to \mathscr M_{g_k}$.
Then, upon replacing $C$ by a finite cover, there exists a smooth surface $Y$,
a smooth fibration $Y\to C$, and a surjective morphism $Y\to X$.
\item[(ii)] For some $0\leq k\leq N(n)$, there exists a smooth proper curve $C$
and a non-constant morphism $f:C\to S_n^k\setminus S_n^{k+1}$.
Then, upon replacing $C$ by a finite cover, there exists a smooth surface $Y$,
a fibration $Y\to C$ whose fibers have constant geometric genus, and a surjective morphism $Y\to X$.
\end{itemize}

\end{proposition}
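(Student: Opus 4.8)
\emph{Sketch of the argument.} The plan is to produce $Y$, in both cases, by pulling back the universal family of curves along $f$ and then carrying out a simultaneous normalization over (a finite cover of) $C$. Write $\mathscr U_{X,n}\subset X\times\mathbf P_{X,n}$ for the universal curve, with its projections to $X$ and to $\mathbf P_{X,n}$, and put $Y_0:=f^{*}\mathscr U_{X,n}$; this is a two–dimensional scheme carrying a flat morphism $\pi_0\colon Y_0\to C$ whose fibre over $c$ is the curve $D_c\subset X$ corresponding to $f(c)$, together with a morphism $\phi_0\colon Y_0\to X$. First I would record that $\phi_0$ is surjective: $C$ being proper, $\phi_0(Y_0)$ is a closed subset of $X$, and it has dimension two because the ample divisors $D_c$ genuinely vary — $f$ is non-constant and a general member of a Severi stratum is reduced, so only finitely many of the $D_c$ can be supported on any one fixed curve — whence $\phi_0(Y_0)=X$. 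Consequently it suffices to alter $Y_0$, over a finite cover of $C$, into a smooth surface $Y$ carrying a fibration onto that cover with fibres of the required type; the composite $Y\to Y_0\xrightarrow{\phi_0}X$ will then do the job.

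The crux is the intermediate claim that \emph{in both cases the geometric genus of the fibres $D_c$ is constant along $C$}. In case (ii) this is exactly what it means for $f$ to land in the locally closed stratum $S^{k}_{n}\setminus S^{k+1}_{n}$ of the Severi stratification — there the geometric genus takes the constant value generic on the relevant component. In case (i) the morphism $f$ could a priori dip into deeper strata, but the hypothesis rules this out: if the geometric genus of $D_c$ dropped below $g_k:=g^{k}_{n}$ at some $c\in C'$, then the stable reduction of the nearby family of smooth genus–$g_k$ curves would acquire a non-separating node, i.e.\ would meet the Deligne–Mumford boundary, contradicting the assumption that $p\circ f$ extends to a morphism $C'\to\mathscr M_{g_k}$. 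Replacing $C$ by $C'$ in case (i), I may therefore assume throughout that every $D_c$ has geometric genus equal to one fixed integer $g$.

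Now the fibres of $\pi_0$ are all linearly equivalent to $nH$, hence have constant arithmetic genus $g_n$, so the $\delta$-invariant $g_n-g$ of the fibres is constant along the smooth curve $C$. At this stage I would invoke the classical criterion for simultaneous normalization in a flat family of reduced curves (Teissier): constancy of $\delta$ over a reduced base forces the normalization of the total space to be flat over the base with fibres the normalizations of the original fibres. Applied to $\pi_0$, this shows that $Y:=Y_0^{\norm}$ is flat over $C$ with smooth fibres $D_c^{\norm}$, hence is itself a smooth surface and $\pi\colon Y\to C$ a smooth fibration, while $Y\to Y_0\xrightarrow{\phi_0}X$ is surjective. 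This proves (i). In case (ii) the same construction yields a surface $Y$ and a fibration $Y\to C$ all of whose fibres have the constant geometric genus $g^{k}_{n}$, surjecting onto $X$, which is the assertion. (Should one want the moduli map realized by a genuine family of stable curves rather than by a map to the stack $\mathscr {\overline M}_{g_k}$, one first pulls back a level structure along a further finite cover of $C$, which is harmless.)

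I expect the real obstacle to be precisely the constancy of the geometric genus, together with the verification that Teissier's hypothesis then applies; once $\delta$ is known to be constant, everything else is formal. In case (i) this constancy is a statement about how the curve $f(C)$ approaches the boundary of moduli space, which is exactly what the hypothesis on $p\circ f$ is engineered to control; in case (ii) it is the defining property of the open Severi stratum. The only other point, the surjectivity of $\phi_0$, is routine: it merely requires that the moving family $(D_c)_{c\in C}$ sweeps out all of $X$, which follows from non-constancy of $f$ and reducedness of a general member.
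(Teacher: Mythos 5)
Your construction follows the paper's: pull back the universal family along $f$, and pass to the normalization of the total space. In case (ii) your explicit appeal to Teissier's simultaneous normalization criterion is a welcome precision (the paper leaves this implicit), and the $\delta$-invariant is indeed constant there because both the arithmetic genus of the fibres (fixed by the linear system) and the geometric genus (fixed by landing in the locally closed stratum) are constant. So (ii) is fine.

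In case (i), however, the crucial intermediate claim — that the hypothesis on $p\circ f$ forces the geometric genus of the $D_c$ to be constant — is false, and the rest of the argument collapses with it. You argue that a drop in $g(D_c^{\norm})$ would force the stable reduction of the nearby family to acquire a non-separating node and hence land in $\Delta\subset\overline{\mathscr M}_{g_n^k}$. That inference only applies to nodal degenerations. A cuspidal degeneration drops the geometric genus while keeping the stable limit smooth: for the pencil of plane cubics $y^2z=x^3-tz^3$, the generic fibre is smooth of genus $1$ with $j=0$, the fibre at $t=0$ is a cuspidal cubic of geometric genus $0$, yet the moduli map $t\mapsto j(D_t)$ is the constant morphism into $\mathscr M_1$ — the extension hypothesis of (i) holds with no finite cover needed, while the geometric genus is emphatically not constant. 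Correspondingly, the $\delta$-invariant of the fibres is not constant, so Teissier's theorem does not apply, and indeed the normalization of the total space (or of its base change under $t=u^6$) still has a cuspidal fibre over $0$: one needs a genuine birational modification (stable reduction of the total space), not merely normalization, to produce a smooth family, and one must then separately extend the resulting rational map to $X$. Your argument in (i) therefore has a real gap; the paper's own one-line proof is terse on the same point but does not commit to the incorrect claim about genus constancy.
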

\begin{proof}
\begin{itemize}
\item[]
\item[(i)]As in the previous section, $\mathbf P_{X,n}$ is equipped with a universal space $u:\mathscr U_{X,n}\to \mathbf P_{X,n}$,
and we can consider the fibration $u_C:f^*\mathscr U_{X,n}\to C$. 
The normalization $u_C^{\norm}:(f^*\mathscr U_{X,n})^{\norm}\to C$ 
induces the moduli morphism $p\circ f:C\dashrightarrow \overline{\mathscr M}_{g_k}$
and therefore after replacing $C$ by $C'$, the resulting
fibration is smooth.
\item[(ii)] Since the curves in our family $u_C$ are equigeneric -
meaning that the geometric genus is constant along the fibers -
we have that $u_C^{\norm}$ is again an equigeneric family, with smooth general member.
\end{itemize}
\end{proof}
The natural problem becomes to investigate to what extent Proposition \ref{strategy} can be applied.
The next proposition shows how the geometry of the moduli $\mathscr M_3$ can be
used to construct complete families of smooth curves mapping to the plane, using \ref{strategy}.(i):
\begin{factt}
There exists a complete family $p:Y\to C$ of smooth genus $3$ curves, whose fibrewise
canonical map $|p_*\omega_{Y/C}|:Y\to C\times \mathbf P_k^2$ 
realizes the generic curve as a smooth quartic, and the special ones as double conics with
smooth support.
\end{factt}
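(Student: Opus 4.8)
The plan is to avoid a direct search among pencils of plane quartics. A pencil in the space $\mathbf P^{14}$ of plane quartics meets the discriminant $\Delta$ of singular quartics in $27$ points, and a pencil through a double conic has, besides that one, many irreducible nodal members, whose stable limits lie in the boundary of $\overline{\mathscr M}_3$; so one cannot get a family of smooth curves this way. Instead I would first produce a complete curve in $\mathscr M_3$ meeting the hyperelliptic locus, and then reconstruct the canonical models.

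So, first, take any non-constant complete curve $B\to\mathscr M_3$ -- these exist since $g=3$. Its general point is automatically non-hyperelliptic, because the locus $\mathscr H_3\cong M_{0,8}/S_8$ of smooth hyperelliptic genus-$3$ curves is affine and hence contains no complete curve; and $B$ automatically meets $\mathscr H_3$, because in $\Pic(\overline{\mathscr M}_3)_{\mathbf Q}$ one has $[\overline{\mathscr H}_3]=9\lambda-\delta_0-3\delta_1$, so, $B$ being disjoint from the boundary, $B\cdot\overline{\mathscr H}_3=9\,B\cdot\lambda=9\deg p_*\omega>0$, the Hodge bundle of a non-isotrivial smooth family over a complete base having strictly positive degree. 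After a finite base change $C\to B$ I would lift $B$, through a general multisection of the $PGL_3$-equivariant classifying map $\mathbf P^{14}\dashrightarrow\overline{\mathscr M}_3$ (which over the smooth non-hyperelliptic quartics is a $PGL_3$-bundle), to a morphism $C\to\mathbf P^{14}$ landing in smooth plane quartics, and let $\widehat C$ be the normalization of the closure of its image. The crucial point is that $\widehat C$ meets $\Delta$ only along smooth double conics: a limit point is a singular quartic whose semistable limit is, by construction, the smooth hyperelliptic curve over the corresponding point of $B\cap\mathscr H_3$; since $B$ avoids $\Delta_0$ and $\Delta_1$, this quartic is neither nodal, nor cuspidal, nor reducible, nor supported on a singular conic, leaving only a double conic with smooth support -- and, for a general lift, with eight distinct branch points.

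Finally I would assemble the surface. Pulling back the universal quartic gives a divisor $\mathscr D\subset\widehat C\times\mathbf P^2$ of bidegree $(m,4)$ whose only singular fibres are finitely many smooth double conics, approached transversally by the nearby smooth quartics; taking the double cover $\widehat C'\to\widehat C$ branched over the corresponding points and applying the Brieskorn--Tyurina simultaneous resolution -- after the base change the total space acquires non-isolated $A_1$-singularities, resolved by its normalization -- yields a smooth genus-$3$ fibration $p\colon Y\to C:=\widehat C'$, the fibres over the branch points being hyperelliptic double covers of the conics. On each fibre $\mathscr O_{\mathbf P^2}(1)$ restricts to the dualizing sheaf (by adjunction on a quartic, and because the canonical map of a hyperelliptic genus-$3$ curve is its $2:1$ map to a conic), hence $\mathscr O_{\mathbf P^2}(1)|_Y=\omega_{Y/C}\otimes p^*N$ for a line bundle $N$ on $C$; the three coordinate forms then span $\omega_{Y/C}$ on every fibre, so the induced surjection $N^{\oplus3}\to p_*\omega_{Y/C}$ of rank-$3$ bundles is an isomorphism, the associated projective bundle is $C\times\mathbf P^2$, and the fibrewise canonical map is exactly the given morphism $Y\to C\times\mathbf P^2$, carrying the general fibre onto a smooth quartic and the special fibres onto double conics with smooth support. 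The only genuinely delicate input is the very first step: that $\mathscr M_3$ carries complete curves, and the elementary-but-not-formal observation that such a curve must hit $\overline{\mathscr H}_3$; granting this, the rest is bookkeeping with the Brieskorn--Tyurina resolution.
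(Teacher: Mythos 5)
Your argument is correct and constructs essentially the same object, but the key step—producing a complete curve in $\mathscr M_3$ that meets $\mathscr H_3$ and otherwise lies in the non-hyperelliptic locus—is handled by a genuinely different mechanism. The paper goes through the Torelli map and the Baily--Borel compactification: since the complement of $\mathscr M_3$ in $\mathscr M_3^{BB}$ has codimension $2$, a generic complete-intersection curve in $\mathscr M_3^{BB}$ already lies in $\mathscr M_3$, and genericity simultaneously yields transverse intersection with $\mathscr H_3$; this gives existence of the complete curve and the right incidence in one stroke. You instead start from an arbitrary complete curve $B\subset\mathscr M_3$ (existence granted), get non-hyperellipticity of the generic member from affineness of $\mathscr H_3\cong M_{0,8}/S_8$, and force $B\cap\mathscr H_3\neq\emptyset$ from the divisor class $[\overline{\mathscr H}_3]=9\lambda-\delta_0-3\delta_1$ together with positivity of the Hodge bundle on a non-isotrivial complete family. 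Both routes are sound; the paper's is more self-contained and packages transversality for free, while yours is more elementary in not needing $\mathscr M_3^{BB}$, at the price of quoting the Picard-group formula and handling the lift to $\mathbf P^{14}$ and the limiting analysis of singular quartics by hand. You also spell out the base-change-and-normalize step that converts the family of quartics degenerating to $2Q$ into a smooth genus-$3$ family, which the paper leaves implicit; one small imprecision is that what you invoke as ``Brieskorn--Tyurina'' is really semistable reduction and normalization along a non-isolated $A_1$-curve after a degree-$2$ base change, not the simultaneous resolution of isolated surface ordinary double points, though the underlying device (base change then normalize) is the right one.
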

\begin{proof}
The Torelli map $\mathscr M_3\to \mathscr A_3$ is bijective on closed points, so there is a canonical
Baily-Borel compactification $\mathscr M^{BB}_3$,
which is projective and with boundary in codimension $2$. Therefore, a generic complete intersection
curve $C\subset \mathscr M^{BB}_3$ is contained in $\mathscr M_3$ and intersects the hyperelliptic locus
$\mathscr H_3\subset \mathscr M_3$ transversely. 
Moreover the canonical map $|\omega_C|:C\to \mathbf P^2_k$ realizes points of $\mathscr H_3$
as plane double conics, and points of $\mathscr M_3\setminus \mathscr H_3$ as smooth plane quartics.
\end{proof}
We refer to \cite{20}, pp.133 for a discussion around the modular behavior of families
plane quartics degenerating to double concics.\\
Something can be said on surfaces with negative Kodaira dimension,
indeed \ref{strategy}.(ii) quickly proves:

\begin{proposition}\label{negkod}
Let $X$ be a smooth surface with negative Kodaira dimension.
Then for $n$ sufficiently big, there exists a smooth proper curve $C$
and a non-constant morphism $f:C\to S_n^k\setminus S_n^{k+1}$. Therefore,
$X$ can be dominated by an equigeneric family of curves.
\end{proposition}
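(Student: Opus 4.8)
The plan is to find, inside $\mathbf P_{X,n}=\mathbf P(H^0(X,\mathcal O_X(nH)))$ for $n\gg0$, a complete rational curve $C$ all of whose members have the same geometric genus, and then to invoke Proposition \ref{strategy}.(ii). The input from the classification of surfaces is that $\kappa(X)=-\infty$ forces $X$ to carry a positive-dimensional linear system $|L|$ whose members are supported on rational curves and cover $X$. Indeed, by Enriques--Kodaira $X$ is either $\mathbf P^2$ --- and then one takes $|L|=|\mathcal O(1)|$ --- or a blow-up $g\colon X\to X_0$ of a $\mathbf P^1$-bundle $\pi_0\colon X_0\to B$, in which case one sets $\pi=\pi_0\circ g\colon X\to B$ and takes $|L|=|\pi^*P|$ for any complete linear system $|P|$ of positive dimension on $B$; then every member of $|L|$ is a union of fibres of $\pi$, hence is supported on rational curves and has geometric genus $0$, and these members visibly cover $X$.

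Next I would fix $H$ as above, choose $n$ large enough that $nH-L$ is very ample, and use Bertini to pick a smooth irreducible curve $\Gamma\in|nH-L|$; its genus $g':=g_{|nH-L|}$ is then positive. Adding $\Gamma$ defines a linear embedding $\phi\colon|L|\hookrightarrow\mathbf P_{X,n}$, $L'\mapsto\Gamma+L'$. The key point is that geometric genus (computed, as usual, on the normalisation of the reduction) is additive over irreducible components: since $g'>0$, the curve $\Gamma$ is never a component of $L'$, so the reduction of $\Gamma+L'$ is $\Gamma$ together with a union of rational curves and hence has geometric genus exactly $g'$, uniformly in $L'\in|L|$. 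Therefore $\phi(|L|)$ is an irreducible subvariety of $\mathbf P_{X,n}$ along which the geometric genus is constant; taking $k$ minimal with $\phi(|L|)\subseteq S_n^k$, this constancy forces $\phi(|L|)\cap S_n^{k+1}=\emptyset$, so $\phi(|L|)\subseteq S_n^k\setminus S_n^{k+1}$. Restricting $\phi$ to a general line $C\cong\mathbf P^1\subset|L|$ gives the required non-constant morphism $f\colon C\to S_n^k\setminus S_n^{k+1}$.

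The final assertion follows by applying Proposition \ref{strategy}.(ii) to $f$: the family $u_C\colon f^*\mathscr U_{X,n}\to C$ has all fibres of geometric genus $g'$, and since $C$ is a pencil its members $\{\Gamma+L':L'\in C\}$ cover $X$, so $f^*\mathscr U_{X,n}\to X$ is surjective; normalising the total space then produces a smooth surface $Y$, an equigeneric fibration $Y\to C$ with smooth --- though in general reducible --- general member, and a surjective morphism $Y\to X$.

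The only genuine content is the first paragraph: one needs the structural fact that $\kappa(X)=-\infty$ produces a positive-dimensional family of geometric-genus-zero curves covering $X$ (equivalently, that $X$ is ruled); everything after that is formal. The conceptual point is that one should not try to move an \emph{irreducible} smooth curve inside a complete subfamily of $\mathbf P_{X,n}$ --- which would amount to Conjecture \ref{conj} itself --- but instead trade it for a \emph{fixed} irreducible curve plus a moving union of rational curves, for which equigenericity is automatic and completeness of the base is free.
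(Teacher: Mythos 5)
Your proposal is correct in outline, but it takes a genuinely different route from the paper, and it has a few points worth flagging.

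The paper's proof is a pure dimension count on the Severi stratification: since $K_X$ is not pseudoeffective, $K_X\cdot H<0$, hence $d_n-g_n=-nH\cdot K_X-2+\chi(\mathscr O_X)>0$ for $n\gg 0$. Because geometric genus strictly drops along the chain of strata, the chain has length at most $g_n+1$, while $\dim S_n^0=d_n>g_n$; so the codimensions cannot all be $1$, and a jump of $\geq 2$ must occur, at which point a generic complete-intersection curve inside the larger stratum misses the smaller one. The only structural input is the numerical inequality $K_X\cdot H<0$.

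Your proof instead uses the full Enriques classification ($\kappa=-\infty\Rightarrow$ rational or ruled), building a linear pencil $\{\Gamma+L'\}$ where the genus is carried by a fixed curve $\Gamma$ and the moving part $L'$ sweeps through fibres of a ruling. That is a clean and explicit construction, and it is a legitimate alternative. A few caveats though:

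\begin{itemize}
\item You write ``taking $k$ minimal with $\phi(|L|)\subseteq S_n^k$''; this should read \emph{maximal} — the strata decrease, so the minimal such $k$ is always $0$. With the correct choice, the argument that constancy of geometric genus along $\phi(|L|)$ forces $\phi(|L|)\cap S_n^{k+1}=\emptyset$ relies on lower semicontinuity of geometric genus and on the implicit convention that $S_n^{k+1}$ is exactly the closed locus of genus $\leq g_n^{k+1}$; the paper's phrasing is vague enough that you should make this precise.
\item Your family has \emph{every} member reducible, hence $f^*\mathscr U_{X,n}$ is itself reducible (it contains $\Gamma\times C$ as a component), and normalizing disconnects it. The component $\Gamma^{\norm}\times C$ does not dominate $X$, so the surjection onto $X$ must come from the other component, whose fibres are unions of rational curves, of genus $0$ — not $g'$. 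So the equigeneric family you actually extract has genus $0$ fibres, and the whole apparatus of $\Gamma$ and its positive genus was only needed to place the pencil inside $|nH|$ and hence inside a stratum. This is formally enough for the statement (``an equigeneric family of curves'') but it is strictly weaker than what the paper's route gives, where the general member is the general curve of a Severi stratum and is expected to be irreducible; compare the Remark immediately after the Proposition, which points out that the real obstacle to improving the result is precisely reducible special fibres.
\item The paper's argument is insensitive to the ground field beyond what the Severi stratification already requires, whereas over non--algebraically--closed fields the dichotomy ``$\mathbf P^2$ or blow-up of a $\mathbf P^1$-bundle'' needs more care (minimal surfaces with $\kappa=-\infty$ over general $k$ include conic bundles and del Pezzo surfaces of Picard rank one). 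You should either work over $\bar k$ and descend, or adjust the classification input.
\end{itemize}

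So: different mechanism (structural/constructive vs.\ numerical/dimension-theoretic), both valid, but yours delivers a quantitatively weaker output and uses a stronger hypothesis than the paper needs.
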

\begin{proof}
Consider the following inductive construction: set $S_0:=S_n^0$, and assuming 
defined $S_k$, let $S_{k+1}\subset S_k$ be an irreducible component of $S_n^{k+1}$ of
maximal dimension. We claim that, for some $k$, $\dim S_k -\dim S_{k+1}\geq 2$:
by assumption the canonical bundle of $X$ is not pseudoefective, therefore
$K_X\cdot H<0$, so then $d_n-g_n=-nH\cdot K_X -2+\chi(\mathscr O_X)$ is positive for $n$
sufficiently big. Hence it is impossible to have $\dim S_k= \dim S_0 -k = d_n-k$ for every $k$.
\end{proof}

\begin{remark}
Unfortunately, this is still not enough to prove the conjecture in negative
Kodaira dimension: obviously, there exist equigeneric families of curves
with smooth generic member, yet carrying singular, necessarily not irreducible, members.
\end{remark}

The situation becomes more interesting in non-negative Kodaira dimension,
where the genus $g_n$ tends to be bigger than the dimension $d_n$, and the
stratification might consist of strata of consecutive codimension one. In
fact this happens on general hypersurfaces:
\begin{theorem}\cite{11}
Let $X$ be a general hypersurface of $\mathbf P^3$ of degree $d\geq 4$. For $n\geq d$
and any $0\leq k\leq d_n$, the variety $S_n^k$ contains an irreducible component of
dimension $d_n-k$ whose generic point parametrizes curves with $k$ nodes.
\end{theorem}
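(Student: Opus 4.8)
The plan is to reduce the statement to a dimension count on the linear system $\mathbf{P}_{X,n}$, controlled by the theory of Severi varieties (the case $X = \mathbf{P}^2$) transplanted onto the general hypersurface via degeneration. First I would recall that for a smooth curve $\Gamma \subset X$ in $|nH|$ with $\delta$ nodes, the expected codimension inside $\mathbf{P}_{X,n}$ of the locus of such curves is exactly $\delta$: imposing a node at a prescribed point is three linear conditions, and moving the node in $X$ recovers two parameters, so each node cuts the dimension by one — provided the nodes can be imposed independently and the resulting family is reduced of the expected dimension. Thus the content of the theorem is that on a \emph{general} hypersurface $X$ of degree $d \ge 4$, and for $n \ge d$, the Severi-type locus $V_{n,k} \subset \mathbf{P}_{X,n}$ of $k$-nodal curves is nonempty and has a component of the expected dimension $d_n - k$, which then automatically sits inside $S_n^k$ because a $k$-nodal curve has geometric genus $g_n - k \le g_n^k$.

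The key steps, in order: (1) \emph{Existence of a maximally nodal curve.} Produce at least one reduced curve in $|nH|$ on the general $X$ with the maximal relevant number of nodes — concretely, since $n \ge d$, one can take a union of plane sections or, better, specialize $X$ to a union of lower-degree pieces (or to a cone, or to a hypersurface containing a distinguished curve) and build a highly singular but reduced member of $|nH|$ by hand, then smooth the ambient $X$ while keeping $X$ general. (2) \emph{Independence of nodes / regularity of the Severi locus.} Show that, for the general $X$, at a general such nodal curve $\Gamma$ the tangent space to $V_{n,k}$ has the expected dimension; equivalently, $H^1$ of the twisted ideal sheaf of the node-scheme vanishes, which one gets from $n \ge d$ by a Castelnuovo–Mumford regularity estimate on $X$ together with the hypothesis $H^1(X,\mathscr{O}_X(nH)) = 0$ already in force. (3) \emph{Descending induction on $k$.} Given a component of $V_{n,k}$ of dimension $d_n - k$, one node can be "smoothed" to produce a component of $V_{n,k-1}$ of dimension $d_n - k + 1$, and conversely a general point of the bigger stratum has a node that can be degenerated; this propagates the expected-dimension statement through the whole range $0 \le k \le d_n$. (4) Conclude that $V_{n,k}$, hence $S_n^k$, contains the required component.

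The main obstacle is step (2): controlling the Severi locus on the \emph{general} hypersurface rather than on $\mathbf{P}^2$ or a surface with lots of sections. On $\mathbf{P}^2$ the independence of nodes and the smoothness of the Severi variety are classical (Severi, Harris, Zariski), but on a hypersurface of degree $d \ge 4$ — already of general type for $d \ge 5$ — there is no a priori reason the node-scheme imposes independent conditions on $|nH|$, and the standard obstruction space $H^1(X, \mathscr{I}_Z(nH))$ need not vanish for a bad configuration $Z$ of $k$ points. The resolution is to choose the nodal curve together with $X$ via a carefully rigged degeneration so that the nodes land in "general enough" position, and to invoke the bound $n \ge d$ precisely to force the regularity/vanishing that kills the obstruction; making the genericity of the node configuration precise, and checking it survives the specialization of $X$, is the delicate point, and is exactly where the hypothesis that $X$ be \emph{general} (not arbitrary) is indispensable. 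This is the technical heart carried out in \cite{11}.
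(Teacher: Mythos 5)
The paper does not prove this theorem; it quotes it directly from Chiantini--Ciliberto \cite{11}, as the citation in the theorem header signals, and then moves on to use it. There is therefore no ``paper's own proof'' to compare your sketch against---the expected answer here is a one-line acknowledgment of the citation, not a reconstruction of the source.

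That said, as a summary of what \cite{11} actually does, your outline is in the right spirit but blurs the one point that makes the problem hard on a general hypersurface. The expected-dimension count (three conditions per node at a fixed point, minus two for letting the point move) and the passage ``$V_{n,k}\subset S_n^k$'' are fine, and the descending induction on the number of nodes is indeed part of the standard Severi-variety toolkit. But the genuinely delicate work in Chiantini--Ciliberto is not a straightforward Castelnuovo--Mumford regularity estimate for $H^1(X,\mathscr I_Z(nH))$: since $X$ is of general type for $d\geq 5$, there is no cheap positivity that forces a random node-scheme to impose independent conditions, and the hypothesis $n\geq d$ does not by itself produce such a vanishing. Their argument instead rests on an explicit construction of nodal curves (essentially via tangent hyperplane sections and their deformations) whose nodes are shown to be non-obstructed, followed by a semicontinuity/specialization argument to reach the general $X$; this is where both the genericity of $X$ and the bound $n\geq d$ actually enter. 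Your step (2), as written, presents a vanishing statement that is not available in this generality and would need to be replaced by the construction-plus-specialization argument from \cite{11}. Since the paper under review takes the theorem as a black box, none of this is needed here---but if you do want to cite the proof mechanism, the regularity framing should be dropped.
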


In this situation, the best we can hope in order to construct a 
smooth family is a positve answer to:

\begin{question}\label{quest}
Can we find
$n,k$ such that the boundary divisor $\Delta_{g_n^k}\subset\mathscr {\overline M}_{g_n^k} $, restricted to the closure of
$p(S_{n}^{k})$, is not ample ? Even better, admitting a contraction ?
\end{question}
A positive answer to the above would provide us with
a curve to which apply Proposition \ref{strategy} (i), and hence prove the conjecture in dimension $2$.
The problem is that the image of $p$ in the moduli $\mathscr M_{g_n^k}$ is going
to be of high codimension and extremely non-generic. For example, consider the natural rational map
$$p:\mathscr {\overline M}_{g_n}\dashrightarrow \mathscr M_{g_n}^{BB}$$
Then, unless $X$ is dominated by an isotrivial surface, $\overline {p(S_n^k)}\cap \Delta_{g_n^k}$ is not contracted by $p$, 
albeit $p$ does restrict to a contraction along
$\Delta_{g_n^k}$.
\begin{section}{flexibility of Kodaira fibrations}
\label{three}
In this section we emphasize that the class of 
Kodaira fibrations, i.e. those surfaces admitting
a non-isotrivial smooth morphism onto a smooth curve,
is remarkably flexible, and there plenty of smooth fibrations that
can be constructed out of given ones.
First, let us review Kodaira's original construction, \cite{7}:
given any curve $C_0$ of genus at least $2$, let $C\to C_0$ be
any non-trivial, finite \'etale cover with Galois group $\Gamma$.
Consider, for any $m||\Gamma|$, the natural quotient $\pi_1(C)\to H_1(C,\mathbf Z/m\mathbf Z)$,
and the corresponding \'etale cover $f:C'\to C$ with Galois group $H_1(C,\mathbf Z/m\mathbf Z)$.
The crucial observation is that, by the Kunneth formula,
the class of the graph $\Gamma_f$ inside $H^2(C'\times C, \mathbf Z/m\mathbf Z)$
depends uniquely on the morphism $f^*:H^q(C,\mathbf Z/m\mathbf Z)\to H^q(C',\mathbf Z/m\mathbf Z)$.
By construction, this morphism is trivial when $q=1,2$, while it is an isomorphism when $q=0$.
In particular, the cohomology class of $\Gamma_{\gamma\circ f}, \gamma\in \Gamma$
is independent of $\gamma$. Since $m||\Gamma|$, we deduce that
$D:=\cup_{\gamma} \Gamma_{\gamma\circ f}$ is $m$-divisible in $H^2(C'\times C,\mathbf Z)$.
Let $X=X(C,m)\to C'\times C$ be the cyclic covering of order $m$, branched along $D$.
Since the $\Gamma_{\gamma\circ f}$ are pairwise disjoint and each of them is an \'etale
multisection of the second projection $p_2:C'\times C\to C$, we deduce that
the composition $X\to C$ is a non-isotrivial smooth fibration.\\
We now employ Kodaira's construction as follows:
\begin{proposition}\label{catanese}
Given two curves $C_1,C_2$, there exists a curve $C$, a smooth non-isotrivial fibration in smooth 
curves $Y\to C$ and a finite morphism $Y\to C_1\times C_2$.
\end{proposition}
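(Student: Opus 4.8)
The plan is to realize $C_1\times C_2$ as a quotient of (a blow-up of) a product arising from Kodaira's construction, then pull the resulting smooth fibration back along multiplication maps to turn that blow-up back into a genuine product of curves — exactly the mechanism already used in Proposition (the blow-up-of-$\mathbf{P}^2$ proposition) and its corollary, but now applied to $C'\times C$ instead of an abelian surface. Concretely, I would first reduce to the case where $C_1$ and $C_2$ both have genus at least $2$: if one of them is rational or elliptic, replace it by a curve of higher genus mapping onto it (e.g. a suitable cover), since dominating the larger product dominates $C_1\times C_2$.

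Next I would run Kodaira's construction starting from a well-chosen base. The key point is to choose the input curve $C_0$ and the tower $C'\to C\to C_0$ so that the Kodaira surface $X=X(C,m)$, which sits inside $C'\times C$ as a cyclic cover branched along $D=\bigcup_\gamma \Gamma_{\gamma\circ f}$, comes equipped with dominant maps to $C_1$ and to $C_2$. The natural choice is to take $C_0$ to admit a non-constant map to $C_1$, and to arrange that $C$ (or $C'$) admits a non-constant map to $C_2$; then $X\to C'\times C\to C\to C_0\to C_1$ handles one factor, and $X\to C'\times C\to C\to C_2$ (or via $C'$) handles the other. This produces a morphism $X\to C_1\times C_2$, and by Kodaira's construction the projection $X\to C$ is a non-isotrivial smooth fibration in smooth curves. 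So far $X$ itself plays the role of $Y$ — provided the map to $C_1\times C_2$ is already defined everywhere, which it is, being a composition of morphisms.

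The remaining subtlety is that we want $Y\to C$ to be a fibration over a \emph{curve} $C$ with $Y\to C_1\times C_2$ a finite morphism, but a priori $X\to C_1\times C_2$ need only be dominant of finite degree. To upgrade dominance to finiteness, I would compose with the Stein factorization: let $Y\to C_1\times C_2$ be finite and $Y\to X$ birational, so that $Y\to C_1\times C_2$ is finite; one then checks the composite $Y\to X\to C$ remains a smooth fibration after a further finite base change on $C$ (using Proposition \ref{strategy}-style reasoning, or directly: a birational modification of a smooth fibration, followed by blowing down, can be arranged to stay smooth over a cover of the base). The main obstacle I expect is precisely this last bookkeeping — ensuring that the modification needed to make the map to $C_1\times C_2$ finite does not destroy smoothness of the fibration onto $C$, and that the non-isotriviality survives; this is where one must be careful about which blow-ups are performed and whether they are vertical for $Y\to C$. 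An alternative route that sidesteps the finiteness worry is to choose the covers in Kodaira's construction so that $C'\times C$ already maps finitely onto $C_1\times C_2$ factor-by-factor (i.e. $C'\to C_1$ and $C\to C_2$ finite), in which case $X\to C'\times C\to C_1\times C_2$ is automatically finite and we may take $Y=X$ outright; I would adopt this cleaner version in the write-up.
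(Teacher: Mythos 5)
Your ``cleaner version'' at the end is exactly the paper's proof: take $C_0$ of genus $\geq 2$ with surjections $C_0\to C_1$ and $C_0\to C_2$, run Kodaira's construction starting from $C_0$, and observe that $X\to C'\times C\to C\times C\to C_1\times C_2$ is a composite of finite morphisms (cyclic cover, \'etale covers, finite maps of curves), hence finite; the first projection $X\to C$ is the required non-isotrivial smooth fibration. One slip in your initial version: routing \emph{both} maps through the single factor $C$ (``$X\to C'\times C\to C\to C_0\to C_1$'' and ``$X\to C'\times C\to C\to C_2$'') would make $X\to C_1\times C_2$ factor through $C$ and so have one-dimensional image; you must use the two distinct projections of $C'\times C$, as your alternative does. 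The Stein-factorization detour is therefore unnecessary: finiteness of the composite is automatic.
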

\begin{proof}
Let $C_0$ be a curve of genus at least $2$ with two surjective morphisms $f_i:C\to C_i$.
Running the above construction, we obtain a Kodaira fibration $X$, with a natural
sequence of finite morphism $X\to C'\times C\to C\times C\to C_1\times C_2$.
\end{proof}
The class of surfaces of general type that are finite quotients of products
of curves is vast.
\cite{12}, and references therein, point to a detailed study of the class
of so-called product-quotient surfaces, which are, by the previous proposition,
also dominated by non-isotrivial smooth fibrations.\\
Similarly to what has been done in Proposition \ref{catanese}, let
$C$ of genus $g\geq 2$, and consider the divisor $\Delta_m\subset \Sym^mC$
inside the $m$-fold symmetric product of $C$, of points $(c_1,...,c_m)$ with $c_i=c_j$ for
some $i\neq j$. 
\begin{proposition}
Let $f:D\to \Sym^mC\setminus \Delta_m$ be a non-constant morphism from a complete
curve $D$. Then there exists a smooth non-isotrivial fibration
 $Y\to D$ and a finite morphism $Y\to C\times D$.
 \end{proposition}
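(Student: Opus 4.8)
The plan is to run Kodaira's construction, recalled above, on the \'etale multisection of $C\times D$ cut out by $f$. Replacing $D$ by its normalization we may assume $D$ smooth, and we may assume $m\geq 2$, the case $m=1$ being Proposition~\ref{catanese} applied to $C_1:=C$ and $C_2:=D$. Pulling back the universal degree-$m$ divisor of $C\times\Sym^mC$ along $\mathrm{id}_C\times f$ produces an effective divisor $Z\subset C\times D$ whose second projection $p_2\colon Z\to D$ is finite of degree $m$; because the image of $f$ avoids $\Delta_m$, every fibre of $p_2$ consists of $m$ distinct points, so $Z\to D$ is in fact finite \'etale, and $Z$ is a disjoint union of smooth complete curves, each an \'etale multisection of the projection $C\times D\to D$ and each projecting to $C$.

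Following Kodaira's recipe, I would then pass to finite \'etale covers of $D$ --- which is harmless, in the spirit of the constructions above --- in order to put $Z$ in good position. Replacing $D$ by the Galois closure of $Z\to D$, we may assume that $Z=\bigsqcup_{i=1}^m\Gamma_{g_i}$ is a disjoint union of $m$ graphs of morphisms $g_i\colon D\to C$, pairwise disjoint because the image of $f$ avoids $\Delta_m$. By the K\"unneth formula, the class of a graph $\Gamma_{g_i}$ in $H^2(C\times D,\mathbf Z/m)$ is governed by its horizontal degree --- which equals $1$, so the $m$ graphs jointly contribute a multiple of $m$ --- by its vertical degree $\deg g_i$, and by $g_i^{*}\colon H^1(C,\mathbf Z/m)\to H^1(D,\mathbf Z/m)$; hence, after one further \'etale cover of $D$ chosen to annihilate modulo $m$ the finitely many subgroups $g_i^{*}(H^1(C))$, and, by means of an auxiliary $\mathbf Z/m$-cover, to make every $\deg g_i$ divisible by $m$, the integral class of $Z$ becomes $m$-divisible in $H^2(C\times D,\mathbf Z)$. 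Since $\Pic^0$ is divisible, $\mathscr O_{C\times D}(Z)$ then acquires an $m$-th root $L$; I take $Y\to C\times D$ to be the $m$-fold cyclic cover attached to an isomorphism $L^{\otimes m}\cong\mathscr O_{C\times D}(Z)$, branched along $Z$, and write $Y\to D$ for the composite $Y\to C\times D\to D$.

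It then remains to verify the three requirements, and I expect only the last to be genuinely delicate. Finiteness of $Y\to C\times D$ is automatic, and composing with the finite covers of $D$ used above gives a finite morphism onto the original $C\times D$; moreover $Y$ is smooth, because $Z$ is smooth and reduced. The morphism $Y\to D$ is flat, and its fibre over a point $d$ is the $m$-fold cyclic cover of $C$ branched along the $m$ distinct points $Z\cap(C\times\{d\})$: this is a smooth curve, connected because the local monodromy around each branch point generates $\mathbf Z/m$, so $Y\to D$ is a smooth fibration in smooth connected curves. For non-isotriviality: since $g(C)\geq 2$ the fibres of $Y\to D$ have genus $\geq 2$, hence finite automorphism group, so if $Y\to D$ were isotrivial then, after a finite \'etale base change $\widetilde D\to D$, both the family $Y\times_D\widetilde D\to\widetilde D$ and its Galois $\mathbf Z/m$-action would become constant; passing to the $\mathbf Z/m$-quotient, the branch divisor $Z\times_D\widetilde D$ of $Y\times_D\widetilde D\to C\times\widetilde D$ would be forced to be a product $B_0\times\widetilde D$, whence every $g_i$, and therefore $f$, would be constant --- a contradiction. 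The divisibility bookkeeping in the middle step is purely mechanical once Kodaira's construction is in hand; the non-isotriviality argument, which hinges on the finiteness of the automorphism groups of the fibres, is where the real content lies.
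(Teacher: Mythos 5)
Your proof is correct, but it runs Kodaira's construction over a different product than the paper does. The paper takes the incidence curve $D_f\subset C\times D$ (your $Z$) with its \'etale projection $p\colon D_f\to D$, and builds the cyclic cover over $D_f\times D$ (after replacing $D$ by a suitable Galois cover), branched along a Galois orbit of graphs of $p$; the finite map to $C\times D$ then comes a posteriori from composing with the map $D_f\times D\to C\times D$ induced by $D_f\hookrightarrow C\times D$. You instead take the cyclic cover directly over $C\times D$, branched along the multisection $Z$ itself, after forcing $m$-divisibility of $[Z]$ in $H^2(C\times D,\mathbf Z)$ by an explicit K\"unneth bookkeeping over the Galois closure of $Z\to D$ together with one more homology cover. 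Both approaches replace $D$ by a finite \'etale cover --- the intended reading of the statement --- and both are instances of the same Kodaira mechanism, but your version makes the finite map to $C\times D$ tautological (it is the covering map) at the modest cost of having to treat the vertical-degree and $H^1\otimes H^1$ pieces of $[Z]$ by hand, which the paper's choice of ambient product avoids. Your non-isotriviality argument --- finiteness of automorphisms in genus $\ge 2$ forces the $\mathbf Z/m$-action, hence the branch locus, to be locally constant in an isotrivial family, so the $g_i$ and therefore $f$ would be constant --- is also more self-contained than the paper's appeal to ``as before'' and is correct as written.
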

 \begin{proof}
The morphism $f$ defines a divisor $D_f\subset C\times D$, whose fiber over $d\in D$ is the $m$-tuple
of points $f(d)\subset C$, and by construction the projection $p:D_f\to D$ is \'etale.
As before, upon replacing $D$ by a non-trivial Galois cover, one constructs 
Kodaira fibrations as cyclic covers of $D_f\times D$, branched along the Galois orbit
of the graph of $p$.
\end{proof}
Observe that this construction provides many examples when $m=2$, since $\Delta\subset \Sym^2C$
can be contracted via $\Sym^2C\to \Jac^0(C)/(-1)$, $(c_1,c_2)\to [c_1-c_2]$. 
The case $m\geq 3$ is more subtle, since the diagonal $\Delta_m\subset \Sym^mC$
lies on the boundary of the effective cone of $\Sym^m C$, and even its numerical properties
seem rather mysterious.\\
In a deeper vein, the next proposition shows that
pairs of Kodaira fibrations admit
common refinements, meaning that they can dominated simultaneously by a third 
Kodaira fibration.
\begin{proposition}\label{fibered}
Let $q_1:X_1\to C_1$ and $q_2:X_2\to C_2$ be smooth fibrations in curves.
 Then there exists a smooth curve $C_0$ with surjective morphisms
$t_i: C_0\to C_i$, a fibration in curves $X\to C_0$ and finite $C_0$-morphisms $X\to t_i^*X_i$, $i=1,2$.
\end{proposition}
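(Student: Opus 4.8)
The plan is to reduce both fibrations to a common base, pass to the relative fiber product, and cut it down by a sufficiently ample divisor; the only real work is a dimension count guaranteeing finiteness.

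First I would construct $C_0$. Since the maps $X\to t_i^*X_i$ are required to be $C_0$-morphisms, $X$ must factor through the relative fiber product of the $t_i^*X_i$, so $C_0$ has to come with surjections onto both $C_1$ and $C_2$. I therefore take $C_0\subset C_1\times C_2$ to be a general smooth ample curve; the two projections restrict to surjective morphisms $t_i:C_0\to C_i$. Setting $Z_i:=t_i^*X_i=X_i\times_{C_i}C_0$, each $Z_i\to C_0$ is again a smooth projective fibration in connected curves, and I form $W:=Z_1\times_{C_0}Z_2$, a smooth projective threefold $\pi:W\to C_0$ whose fibers $W_c=(Z_1)_c\times(Z_2)_c$ are surfaces, equipped with the two projections $p_i:W\to Z_i$, each a smooth fibration in curves over the surface $Z_i$.

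Next, $X$ will be a general member of $|A^{\otimes m}|$, where $A$ is a fixed very ample line bundle on $W$ and $m$ is large. By Bertini's theorems $X$ is smooth and irreducible; being an effective divisor that dominates $C_0$ it meets every fiber $W_c$ properly, so each $X_c=X\cap W_c$ is a curve, in fact a connected curve (an ample divisor on the surface $W_c$), smooth for general $c$. Hence $X\to C_0$ is a fibration in curves. It remains to see that $p_i|_X:X\to Z_i$ is finite, and this I would extract from a dimension count. The fibers of $p_1$ form a two-dimensional family $\{F_{z_1}\}_{z_1\in Z_1}$ of curves in $W$ (with $F_{z_1}\cong(Z_2)_c$). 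For each $z_1$ the locus of divisors in $|A^{\otimes m}|$ containing $F_{z_1}$ has codimension equal to $h^0(F_{z_1},A^{\otimes m}|_{F_{z_1}})=m(A\cdot F_{z_1})-g(F_{z_1})+1$ for $m$ large — here one uses that $g(F_{z_1})=g((Z_2)_c)$ is constant and that the relevant restriction maps are surjective for $m$ large, uniformly in $z_1$, by relative Serre vanishing over $Z_1$. Since this codimension grows linearly in $m$, for $m$ large it exceeds $\dim Z_1=2$, so a general $X$ contains no fiber $F_{z_1}$; as $A$ is ample, $X\cdot F_{z_1}>0$, so $X\to Z_1$ is surjective with finite fibres, hence finite, and symmetrically for $X\to Z_2$. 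The maps $p_i|_X:X\to Z_i=t_i^*X_i$ are then the required finite $C_0$-morphisms.

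The main point is thus the dimension count, which is what forces $A^{\otimes m}$ to be very positive; the rest is formal. I would, however, be careful to flag what this construction does \emph{not} give: a smooth fibration $X\to C_0$. A general member $X$ fails to be transverse to finitely many of the surfaces $W_c$, so $X\to C_0$ acquires nodal fibers, and — in contrast with the ordinary double points of surfaces resolved via Brieskorn--Tyurina in Theorem~\ref{thmmm}(i) — a nodal degeneration of curves is stable; since $X$ is forced to be finite over the threefold $W$ (both $X\to Z_i$ being $C_0$-morphisms), those singular fibers cannot be removed while keeping $X$ finite over $Z_1$ and $Z_2$. Upgrading the common refinement to an everywhere-smooth Kodaira fibration therefore seems to require a genuinely new idea, the obstruction being the behaviour of $X$ over the relative discriminant of $W/C_0$, which is why the statement only asserts a fibration in curves.
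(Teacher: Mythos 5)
Your construction — cutting the relative fiber product $W=Z_1\times_{C_0}Z_2$ by a general member of $|A^{\otimes m}|$ for $m\gg 0$ — is correct as far as it goes: the dimension count, the connectedness of fibers via ampleness of $A|_{W_c}$, and the finiteness of $X\to Z_i$ all check out, and you do obtain a fibration in curves together with the required finite $C_0$-morphisms. But the statement as intended, and as the paper actually proves it, asks for more: the proposition sits in the section on flexibility of Kodaira fibrations, the sentence introducing it announces that two Kodaira fibrations ``can be dominated simultaneously by a third Kodaira fibration,'' and the paper's proof explicitly ends by producing ``a smooth fibration in curves $X\to C_0$.'' You have put your finger precisely on the obstruction in your own approach: any ample divisor in the threefold $W$ acquires, by a Lefschetz argument, nodal fibers over the discriminant of $W/C_0$, and nodal degenerations of curves are stable, so cannot be resolved fiberwise while keeping $X$ finite over the $Z_i$. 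That missing smoothness is not a refinement but the actual content of the proposition.

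The paper's route is genuinely different and never looks at $W$. For a curve $D$ write $\mathscr M_g(D)\subset \mathscr M_g$ for the substack of smooth genus $g$ curves admitting a surjection onto $D$, and $\mathscr M_g(D_1,D_2)$ for those surjecting onto both $D_1$ and $D_2$; these have relative versions $\mathscr M_g(Y_1,Y_2/C)\to C$ over a base $C$ carrying two families $Y_i\to C$. A trick of Kodaira, \cite{20} 2.33, shows that for any $n$ and $g$ large, $\mathscr M_g(D)$ — and hence $\mathscr M_g(D_1,D_2)$ — contains complete $n$-dimensional subvarieties. After passing to a common cover $C\to C_i$ and setting $Y_i:=t_i^*X_i$, one chooses a surface $Z\subset\mathscr M_g(Y_1,Y_2/C)$ surjecting onto $C$ with complete generic fiber, and takes a multisection $C_0\subset Z$ avoiding the Deligne--Mumford boundary. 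Because $C_0$ lies inside the open moduli of \emph{smooth} curves, the classified family $X\to C_0$ is a smooth fibration by construction, and the tautological surjections $X_c\to (Y_1)_c,(Y_2)_c$ furnish the finite $C_0$-morphisms. The structural difference is decisive: the paper's $X$ has fibers of genus $g$ far larger than that of the fibers of the $Y_i$, so $X$ is a high-degree cover of (a correspondence inside) $Y_1\times_C Y_2$ rather than a divisor in it, and this extra freedom is exactly what sidesteps the Lefschetz degenerations that block the ample-divisor approach. As you suspected, upgrading your construction to a smooth fibration does require a new idea, and this moduli-theoretic replacement of the fiber product is it.
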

\begin{proof}
We first fix some notation: for a given curve $D$, denote by $\mathscr M_g(D)$ the substack of $\mathscr M_g$ parametrizing smooth
genus $g$ curves
$C$ that admit a surjective morphism $C\to D$. Similarly, for a given pair of curves $D_1,D_2$, denote by 
$\mathscr M_g(D_1,D_2)$ the substack of $\mathscr M_g$ parametrizing smooth
genus $g$ curves
$C$ that admit surjective morphisms $C\to D_1$ and $C\to D_2$.
The above admits a relative analogue, in that
if $\mathscr D_1\to B$ and $\mathscr D_2\to B$ are two families of curves, 
we have an induced fibration $\mathscr M_g(\mathscr D_1,\mathscr D_2/B)\to B$, whose fiber over $b\in B$
is $\mathscr M_g(\mathscr {D}_{1,b},\mathscr D_{2,b} )$.
A trick by Kodaira, \cite{20} 2.33, shows that for every $n$, there exists $g$ such that
$\mathscr M_g(D)$ contains complete subvarieties of dimension $n$, and therefore the same holds for $\mathscr M_g(D_1,D_2)$.\\
We are now ready to offer a proof of the proposition: let $C$ be a curve with surjective morphisms $t_i:C\to C_i$ and replace $q_i$ by $p_i:Y_i:=t_i^*X_i\to C$. 
Consider the fibration $\pi: \mathscr M_g(Y_1, Y_2/C)\to C$. By the above remarks, for $g$ big enough there
 exists a surface $Z\subset \mathscr M_g(Y_1, Y_2/C)$
such that $\pi_{|Z}:Z\to C$ is surjective and the fiber $Z\cap \pi^{-1}(c)$ is a complete curve for generic $c\in C$.
Such morphism $\pi_{|Z}$ clearly admits a multisection $C_0$ - for example by taking
 an ample divisor in $\overline Z$, the closure of $Z$
in $\overline{\mathscr M_g}(Y_1, Y_2/C)$, missing the isolated boundary points - and such multisection $C_0$ induces, by definition,
a smooth fibration in curves $X\to C_0$ dominating $Y_1/C$ and $Y_2/C$.
\end{proof}
\end{section}

\begin{section}{Smooth foliations on surfaces of general type}
\label{four}
Motivated by the study of surfaces that carry smooth fibrations,
we dedicate this final section to surfaces of general type carrying smooth foliations. In particular
we look for restrictions the existence of a smooth foliation imposes on the ambient surface, 
and then try to understand what they might possibly look like.
Recall that a smooth foliation $\mathscr F$ on an algebraic surface $X$ defines an exact sequence of vector bundles
$$0\to T\mathscr F\to TX\to N\mathscr F\to 0$$
The next fact recollects some well known numerical properties of $K\mathscr F:=T\mathscr F^{\vee}$.
\begin{fact}\label{numerology}
\begin{itemize}
\item []
 \item [(i)]  $c_2(X)=K\mathscr F\cdot (K_X-K\mathscr F)$.
 \item [(ii)]  $(K_X-K\mathscr F)^2=0$.
 \item [(iii)]  $c_2(X)=K_X^2-K\mathscr F\cdot K_X $.
 \item [(iv)] If $X$ is of general type then $K\mathscr F$ is pseudoeffective, and it is big unless $\mathscr F$ is either
 an isotrivial fibration, or a Hilbert modular foliation.

\end{itemize}

\end{fact}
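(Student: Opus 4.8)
The plan is to establish the four items essentially independently, as each reflects a distinct piece of surface foliation theory. For (i), (ii), (iii) I would start from the exact sequence $0\to T\mathscr F\to TX\to N\mathscr F\to 0$. Taking total Chern classes gives $c(TX)=c(T\mathscr F)c(N\mathscr F)$, hence $c_1(X)=c_1(T\mathscr F)+c_1(N\mathscr F)$ and $c_2(X)=c_1(T\mathscr F)\cdot c_1(N\mathscr F)$. Writing $K\mathscr F=-c_1(T\mathscr F)$ and $K_X=-c_1(X)$, the first relation reads $c_1(N\mathscr F)=-(K_X-K\mathscr F)$, and substituting into the second yields $c_2(X)=K\mathscr F\cdot(K_X-K\mathscr F)$, which is (i). Then (iii) follows by expanding: $K\mathscr F\cdot(K_X-K\mathscr F)=K\mathscr F\cdot K_X-K\mathscr F^2$, so I still need $K\mathscr F^2=K_X\cdot K\mathscr F-K_X^2+2K_X\cdot K\mathscr F-\dots$; more cleanly, (iii) is equivalent to (i) together with (ii), since (i) minus (iii) gives $0=-K\mathscr F^2+K_X^2-K_X\cdot K\mathscr F+K_X\cdot K\mathscr F-K_X^2+K\mathscr F^2$ — so really the content is (ii), and (iii) is a formal consequence of (i) and (ii).

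So the one genuinely geometric input among the first three is (ii), the statement $(K_X-K\mathscr F)^2=0$, i.e. $N\mathscr F^{\vee}$ — or rather $c_1(N\mathscr F)^2=0$. The reason is that a foliation on a surface is given locally by a nowhere-vanishing vector field (since $\mathscr F$ is smooth, $T\mathscr F$ is a subbundle and the foliation has no singularities), equivalently by a section of $N\mathscr F$ twisted appropriately; globally the foliation is defined by a twisted $1$-form $\omega\in H^0(X,\Omega^1_X\otimes N\mathscr F)$ with $\omega\wedge d\omega=0$ (integrability) and, because $\mathscr F$ is smooth, $\omega$ is nowhere zero. The integrability condition forces the vanishing of a class representing $c_1(N\mathscr F)^2$: concretely, $d\omega=\eta\wedge\omega$ for some $1$-form $\eta$ with values in $N\mathscr F$, and the induced section of $N\mathscr F^{\otimes 2}\otimes\Omega^2_X$ coming from $d\omega\wedge d\omega$ computes $c_1(N\mathscr F)^2$ up to $K_X$-terms; smoothness makes it vanish. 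The cleanest route is to quote the Baum–Bott formula for the normal bundle of a smooth foliation on a surface, which gives $\int_X c_1(N\mathscr F)^2 = \sum(\text{residues at singularities}) = 0$ when $\mathscr F$ is smooth. This is exactly item (ii) rewritten as $c_1(N\mathscr F)^2=0$.

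For (iv), I would argue in two stages. First, pseudoeffectivity of $K\mathscr F$: by a theorem of Miyaoka–Bogomolov–McQuillan (the "abundance for foliations" circle of results), if $K\mathscr F$ were not pseudoeffective then $X$ would be uniruled by leaves of $\mathscr F$, forcing $X$ to be a ruled surface, contradicting general type. Hence $K\mathscr F$ is pseudoeffective. Second, the bigness dichotomy: by Zariski decomposition write $K\mathscr F=P+N$ with $P$ nef and $N$ effective; if $K\mathscr F$ is not big then $P^2=0$. Using (ii), which says $(K_X-K\mathscr F)^2=0$, together with $K_X$ big, one analyzes the position of $K\mathscr F$ in the nef cone. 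The degenerate cases are classified by McQuillan's and Brunella's work on the minimal model program for foliated surfaces: $K\mathscr F$ fails to be big exactly when the foliation is either an isotrivial fibration (so $P$ is a multiple of a fiber class) or a Hilbert modular foliation (the foliation coming from one of the two tautological foliations on a Hilbert modular surface, where $K\mathscr F$ is nef with self-intersection zero but $\mathscr F$ has no algebraic leaves). The hard part will be this last classification step: deriving that the only non-big smooth foliations on general-type surfaces are isotrivial fibrations or Hilbert modular foliations genuinely requires invoking the structure theory of foliated surface MMP (McQuillan, Brunella, Mendes), and I would present it as a citation rather than reprove it.
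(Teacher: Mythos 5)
Your proof matches the paper's own in every essential step: (i) from the Chern class computation on the exact sequence $0\to T\mathscr F\to TX\to N\mathscr F\to 0$; (ii) as the Baum--Bott index formula $c_1(N\mathscr F)^2=\sum(\text{residues})=0$, which vanishes because $\mathscr F$ has no singularities; (iii) as a formal consequence of (i) and (ii); and (iv) by citing the Bogomolov--McQuillan theorem on rational curves tangent to foliations for pseudoeffectivity (ruling out uniruledness in general type) and McQuillan's birational classification of foliated surfaces for the isotrivial/Hilbert-modular dichotomy when $K\mathscr F$ is not big. The only blemish is the garbled intermediate algebra in your discussion of (iii) before you correctly conclude that it follows formally from (i) and (ii); the final reasoning there is sound.
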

\begin{proof}
\begin{itemize}
\item[]
 \item [(i)]  This follows by taking Chern numbers in the defining exact sequence of $T\mathscr F$, plus $c_2(X)>0$.
 \item [(ii)]  This is the Baum-Bott index formula, $N\mathscr F^2=0$.
 \item [(iii)]  This is a formal consequence of (i) and (ii).
 \item [(iv)] The pseudoeffectivity of $K\mathscr F$ follows from the Main Theorem of \cite{10}, since $X$ has general type.
By the birational classification of foliations on surfaces, \cite{13},
if a foliation on a surface of general type is such that $K\mathscr F$ is not big, then
the foliation is an isotrivial or Hilbert modular.
\end{itemize}

\end{proof}
Therefore, granted a decent amount of understanding of isotrivial and Hilbert modular
surfaces, we concentrate on the strong topological and algebraic restrictions, for
a surface of general type to carry a smooth foliation.

\begin{proposition}\label{foliation}
Let $X$ a smooth surface of general type, and $\mathscr F$ an everywhere smooth foliation on $X$
with $K\mathscr F$ big.
Then we have
\begin{itemize}
 \item  $\rk \Pic(X)\geq 2$.
 \item  $X$ has positive topological index, i.e. $c_1(X)^2> 2c_2(X)$.
\end{itemize}

\end{proposition}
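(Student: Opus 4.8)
The plan is to reduce both assertions to the single inequality $K\mathscr F^2>0$ by means of Fact \ref{numerology}, and then to deduce that inequality from the fact that $\mathscr F$ is everywhere smooth together with the bigness of $K\mathscr F$.

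Set $D:=K_X-K\mathscr F$, so that $N\mathscr F\equiv-D$. By Fact \ref{numerology}(ii), $D^2=0$; by Fact \ref{numerology}(i), $K\mathscr F\cdot D=c_2(X)$, and $c_2(X)>0$ since $X$ is of general type (as already used in the proof of Fact \ref{numerology}). Expanding $c_1(X)^2=K_X^2=(K\mathscr F+D)^2$ and substituting $D^2=0$ and $K\mathscr F\cdot D=c_2(X)$ gives
\[
c_1(X)^2-2c_2(X)=K\mathscr F^2,
\]
so the second bullet amounts to $K\mathscr F^2>0$. For the first, suppose $\rk\Pic(X)=1$; then the N\'eron--Severi space $N^1(X)_{\mathbf R}$ is spanned by an ample class, the intersection form on it is positive definite, and hence the class $D$, having square zero, is numerically trivial. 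But then $c_2(X)=K\mathscr F\cdot D=0$, a contradiction; so $\rk\Pic(X)\geq2$. (This part uses nothing beyond $D^2=0$ and $c_2(X)>0$.)

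It remains to prove $K\mathscr F^2>0$, and this is the step where the foliation — rather than merely its numerical invariants — must be used. The plan is to show $K\mathscr F$ is nef; combined with bigness this yields $K\mathscr F^2>0$. So suppose $C$ is an irreducible curve with $K\mathscr F\cdot C<0$, and write $K\mathscr F=P+N$ for its Zariski decomposition, $P$ nef and big, $N\geq0$ with negative-definite support. If $C$ is not a component of $N$ then $K\mathscr F\cdot C=P\cdot C+N\cdot C\geq0$, a contradiction; hence $C$ is a component of $N$ and $C^2<0$. If $C$ were not $\mathscr F$-invariant, Brunella's tangency formula would give $K\mathscr F\cdot C=\mathrm{tang}(\mathscr F,C)-C^2\geq-C^2>0$, again a contradiction; so $C$ is $\mathscr F$-invariant. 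Since $\mathscr F$ has no singularities, $C$ is then a smooth complete leaf, so $K\mathscr F|_C\cong K_C$ and $K\mathscr F\cdot C=2g(C)-2<0$, forcing $C\cong\mathbf P^1$. But a compact rational leaf is simply connected, so its holonomy is trivial, and Reeb stability exhibits a neighbourhood of $C$ as a trivial fibration by copies of $C$; hence $C$ moves and $C^2\geq0$, contradicting $C^2<0$ (equivalently, $X$ would be uniruled, contradicting $\kappa(X)=2$). Therefore $K\mathscr F$ is nef, and $K\mathscr F^2>0$.

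The main obstacle is this last step: everything before it is formal manipulation of Fact \ref{numerology}, whereas the proof that $K\mathscr F$ is nef rests on genuinely foliation-theoretic input — Brunella's inequality for non-invariant curves, the identification of invariant curves of a regular foliation with smooth leaves, and Reeb stability to rule out compact rational leaves on a surface of general type. A fully self-contained treatment could instead invoke the birational classification of foliations of general type with big canonical class, \cite{13}, from which the nef-ness of $K\mathscr F$ for a regular foliation follows.
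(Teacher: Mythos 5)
Your proposal is correct, and it takes a genuinely different route from the paper at the crucial step. Your reduction is cleaner than the paper's: from Fact~\ref{numerology}(i)--(ii) with $D:=K_X-K\mathscr F$ (so $D^2=0$, $K\mathscr F\cdot D=c_2$) you get the tidy identity $c_1^2-2c_2=K\mathscr F^2$, so both bullets boil down to the single inequality $K\mathscr F^2>0$ plus the observation that a nontrivial square-zero class forces $\rk\Pic\geq 2$. The paper instead works with $R:=aK_X-K\mathscr F$, the $K_X$-orthogonal part of $K\mathscr F$, uses Hodge index to get $R^2<0$, and then concludes $a>1/2$ by asserting that $K\mathscr F$ lies in the positive cone $\{D:D^2>0,\,D\cdot K_X>0\}$ ``by assumption.'' That assertion is exactly the point your proof handles more carefully: bigness alone does not put a class in the positive cone (e.g.\ on $\Bl_p\mathbf P^2$ the divisor $H+2E$ is big with negative self-intersection), so the paper's cone-position claim is in effect equivalent to the $K\mathscr F^2>0$ it is being used to prove. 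Your argument supplies the missing foliation-theoretic input: nefness of $K\mathscr F$ via Zariski decomposition, Brunella's tangency formula to rule out non-invariant $C$ with $C^2<0$, and the genus count on invariant curves. That is a legitimate and more self-contained way to close the gap; alternatively one could cite the minimal-model theory of \cite{13} for the nefness.

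One small simplification worth making: the Reeb-stability step is not needed. If $C$ is an irreducible $\mathscr F$-invariant curve of a \emph{regular} foliation it is a smooth compact leaf, so $T\mathscr F|_C=TC$ and $N\mathscr F|_C=N_{C/X}$, whence $C^2=\deg N_{C/X}=N\mathscr F\cdot C=(K_X-K\mathscr F)\cdot C=(2g-2-C^2)-(2g-2)=-C^2$, forcing $C^2=0$ for \emph{every} compact leaf, irrespective of genus. This immediately contradicts $C^2<0$, so you never need to invoke $g=0$, trivial holonomy, or uniruledness, and you avoid the slight delicacy of transporting Reeb stability into the holomorphic category.
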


\begin{proof}

Let $a:=\frac{K_X\cdot K\mathscr F}{K_X^2}$, and $R:=aK_X - K\mathscr F$. Since $R\cdot K_X=0$
we deduce, by Hodge index theorem, that $R^2<0$ unless $R$ is numerically trivial.
\begin{claim}\label{R}
$R\neq 0$
\end{claim}
\begin{proof}
We have $$0=R\cdot K_X= -K\mathscr F\cdot K_X + K_X^2 + (a-1) K_X^2=c_2(X)+(a-1)K_X^2$$ so then
$$c_2(X)=(1-a)K_X^2$$
If $R=0$ then $aK_X=K\mathscr F$, and $(K_X-K\mathscr F)^2=0$ implies $a=1$, from which $c_2(X)=0$,
impossible since $X$ has general type.
\end{proof}
From which we deduce that $\rk \Pic(X)\geq 2$.\\ 
Let $P\subseteq NS(X)$ be the plane spanned by $K_X$ and $R$.
We know that $$\Amp(X)=\{D: D^2>0,D\cdot K_X>0\}$$ 
so then Fact \ref{numerology} (i) and (ii)
imply that $K_X-K\mathscr F=(1-a)K_X+R$ lies on the boundary of $\Amp(X)\cap P$. 
It follows that $\Amp(X)\cap P$
is bounded by the rays $$(1-a)K_X + R,\ (1-a)K_X - R$$
Since $$K\mathscr F=aK_X - R\in \Amp(X)\cap P$$ by assumption, we have 
$a > 1-a$, or $a> 1/2$.
The identity $$c_2(X)=(1-a)K_X^2$$ derived in the proof of Claim \ref{R}, 
concludes the proof.
\end{proof}

More interestingly, Brunella \cite{9} has initiated the uniformization theory
on smoothly foliated surfaces $(X,\mathscr F)$ of general type: the universal cover $\tilde{X}$ of
such a surface, admits
a smooth holomorphic fibration $p_{\mathscr F}:\tilde{X}\to \Delta$ onto the unit disk, with disks as fibers,
such that $\mathscr F$ becomes tangent to $p_{\mathscr F}$ on $\tilde{X}$.
In particular, $X$ is a $K(\Gamma,1)$, for $\Gamma:=\pi_1(X)$.
With the aim of better understanding the geometry of $X$, observe that
the fibration $p_{\mathscr F}$ is naturally $\Gamma$-equivariant, and hence
there is a natural representation $\rho:\Gamma\to PSL_2(\mathbf R)$.
Corlette and Simpson, in \cite{14}, have given a complete classification
of what such a $\rho$ can be, and deduced a beautiful dichotomoy
for its geometric origin. We can summarize all of this in:
\begin{theorem}[Brunella, Corlette \& Simpson] \label{BCS}
 Let $(X,\mathscr F)$ be a smoothly foliated surface of general type.
 Then at least one of the following happens:
 \begin{itemize}
  \item $X$ admits a smooth fibration $p:X\to C$ onto a smooth curve $C$
  and $\mathscr F$ is tangent to $p$;
  \item $\rho$ is rigid and integral, there exists a 
  quasiprojective polydisk quotient, $Y$, and a natural morphism $X\to Y$ such that
  $\mathscr F$ is induced by one of tautological codimension one foliations on $Y$.
 \end{itemize}

\end{theorem}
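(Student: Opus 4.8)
The plan is to reduce to the situation covered by Brunella's foliated uniformization, extract from it a representation, and run it through Corlette--Simpson's classification. First I would peel off the degenerate cases with Fact \ref{numerology}(iv). If $K\mathscr F$ is not big, then $\mathscr F$ is an isotrivial fibration or a Hilbert modular foliation. In the first case the everywhere smoothness of $\mathscr F$ forces the associated fibration $p\colon X\to C$ to be a submersion onto a smooth curve, hence a smooth morphism to which $\mathscr F$ is tangent: the first alternative. In the second case the structure of a Hilbert modular foliation supplies a natural morphism $X\to Y$ onto a quotient $Y=\Delta^{2}/\Lambda$ by an irreducible lattice $\Lambda\subset PSL_2(\mathbf R)^{2}$, arithmetic and hence $Y$ quasiprojective by Margulis, carrying $\mathscr F$ onto one of the two tautological codimension one foliations; the resulting $\rho\colon\Gamma\to PSL_2(\mathbf R)$ is $\Lambda\hookrightarrow PSL_2(\mathbf R)^{2}$ followed by a factor projection, hence rigid by superrigidity and integral by arithmeticity: the second alternative.

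So I may assume $K\mathscr F$ big and apply Brunella \cite{9}: the universal cover $\tilde X$ carries a $\Gamma$-equivariant smooth holomorphic submersion $p_{\mathscr F}\colon\tilde X\to\Delta$ with disk fibers, to which the lift of $\mathscr F$ is tangent, and the equivariance is recorded by $\rho\colon\Gamma\to\mathrm{Aut}(\Delta)=PSL_2(\mathbf R)$. Since $X$ is smooth projective of general type, $\Gamma$ is a torsion free Kahler group and $X$ is a $K(\Gamma,1)$; in particular $\mathrm{cd}\,\Gamma=4$, a fact I would use to exclude degenerate $\rho$. The next step is to analyze $\rho$ through its Zariski closure.

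Two reductions then dispose of the degenerate $\rho$. First, $\rho$ cannot have finite image: on the finite \'etale cover trivializing it, $p_{\mathscr F}$ would descend to a holomorphic map from a compact surface to $\Delta$, necessarily constant, contradicting that $p_{\mathscr F}$ is a submersion. Second, if $\rho$ is not Zariski dense then its Zariski closure is, up to finite index and conjugacy, a torus (the Borel case being vacuous for Kahler groups, by Arapura--Nori); an elliptic torus is excluded because the fixed point of $\Delta$ gives a $\Gamma$-invariant disk fiber, forcing $\Gamma$ to act freely and properly discontinuously on a disk and hence $\mathrm{cd}\,\Gamma\le 2$; for a hyperbolic torus $\rho$ has virtually abelian image, and the classical theory of elementary representations of Kahler groups makes $\rho$ factor, after a finite \'etale cover, through the orbifold fundamental group of a hyperbolic curve, so that $p_{\mathscr F}$ descends to a smooth fibration tangent to $\mathscr F$: the first alternative again.

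It remains to treat $\rho$ Zariski dense, where Corlette--Simpson \cite{14} applies: up to the usual finite ambiguities, $\rho$ is either pulled back from an orbifold curve --- yielding, as above, a smooth fibration tangent to $\mathscr F$ --- or of polydisk type, in which case \cite{14} directly provides that $\rho$ is rigid and integral and produces a quasiprojective polydisk quotient $Y=\Delta^{m}/\Lambda$ with a morphism $f\colon X\to Y$ such that $\rho$ is the pullback of $\Lambda\hookrightarrow PSL_2(\mathbf R)^{m}\to PSL_2(\mathbf R)$ along some factor $j$. The final step is the identification of the foliation: $\rho$-equivariance of $p_{\mathscr F}$ forces, after the correct normalization of $f$, the submersion $p_{\mathscr F}$ to be intertwined with the $j$-th coordinate projection $\Delta^{m}\to\Delta$, whose fibers cut out the $j$-th tautological codimension one foliation on $Y$, so that $f^{*}$ of that foliation equals $\mathscr F$: the second alternative. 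I expect this last compatibility to be the main obstacle --- reconciling Brunella's intrinsically defined foliated disk bundle with the extrinsic factor structure of the polydisk, i.e. showing that the classifying map $f$ can be chosen so that the tautological foliation it singles out is exactly $\mathscr F$ --- together with the bookkeeping of finite \'etale covers and orbifold bases required to phrase the first alternative with $X$ itself fibering smoothly over a smooth curve.
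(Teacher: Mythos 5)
The paper itself offers no proof of Theorem~\ref{BCS} beyond the surrounding prose: Brunella's theorem supplies the $\Gamma$-equivariant submersion $p_{\mathscr F}\colon\tilde X\to\Delta$ with disk fibers, this hands you $\rho\colon\Gamma\to PSL_2(\mathbf R)$, and Corlette--Simpson's classification of such representations then yields the stated dichotomy; the theorem is presented as a synthesis of those two results rather than as a derivation. Your proposal follows exactly this pipeline but substitutes a much more careful argument for the authors' gestures, and the extra structure is genuinely worthwhile. In particular: (1) you begin by splitting off the $K\mathscr F$ non-big case via Fact~\ref{numerology}(iv), which the paper never mentions — this is actually a needed reduction, since Brunella's disk-bundle uniformization is a statement about foliations with big $K\mathscr F$ (hyperbolic leaves), and the isotrivial/Hilbert-modular exceptional cases slot directly into the two alternatives of the theorem; (2) you explicitly rule out $\rho$ with finite or non-Zariski-dense image using $\mathrm{cd}\,\Gamma=4$, Arapura--Nori, and the Kähler-group factorization theory, whereas the paper tacitly assumes Corlette--Simpson applies; (3) you correctly flag the one point that both your sketch and the paper leave unaddressed, namely the identification of $\mathscr F$ with the pullback of the tautological codimension-one foliation under the period/classifying map $f\colon X\to Y$, which is not automatic from the abstract classification of $\rho$ and requires arguing that Brunella's intrinsic submersion agrees with a factor projection of the polydisk up to equivariant biholomorphism. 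In short, your route is the paper's route, but you actually walk it; the elementary case analysis you insert is the honest version of what the paper asserts by citation, and you have correctly isolated where a rigorous write-up would need to do real work.
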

The only issue to be settled here, is the relation between the dimensions of $X$
and $Y$. It seems plausible, thinking about the wild behaviour of Hilbert
modular foliations,
that indeed the dimensions must be the same, and $X$ itself is a bidisk quotient.\\
Let us remark that bounded symmetric domains can be used in our problem of
finding dominant classes as follows:
\begin{proposition}\label{balls}
 Let $X$ be a smooth algebraic surface. Then there exists a finite morphism
 $Y\to X$, such that an \'etale cover $Y'$ of $Y$ is a Stein submanifold
 of a $3$-dimensional ball.
\end{proposition}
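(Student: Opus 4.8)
The plan is to realize $X$ via a branched cover trick followed by a ball-quotient / bidisk-quotient construction. The strategy exploits the existence of compact ball quotients and polydisk quotients: a smooth compact ball quotient surface $B$ of dimension $2$ sits inside the $3$-ball tautologically, and any smooth algebraic surface $X$ should be dominated by a finite cover of such a $B$ (or of a product of curves, after the flexibility results of Section \ref{three}). Concretely, I would first reduce to the case where $X$ has general type (for lower Kodaira dimension one can use the explicit dominations by products of curves constructed in Section \ref{two}, e.g. Corollary after Proposition \ref{negkod}, and those products are themselves étale-covered by, or finite over, pieces of polydisk quotients). So assume $X$ is of general type.

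\medskip

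For a surface $X$ of general type, I would use the fact that any smooth projective surface admits a finite morphism onto $\mathbf P^2$ (a generic linear projection after re-embedding), and then compare with a ball quotient mapping onto $\mathbf P^2$ with a suitable branch locus. More precisely, by a construction going back to the uniformization of $\mathbf P^2$ branched along configurations of lines (Hirzebruch, Yoshida), there is a smooth compact ball quotient $B$ and a finite morphism $B\to \mathbf P^2$ branched along a chosen arrangement; by choosing the arrangement to contain the branch divisor of $X\to \mathbf P^2$, and passing to the fiber product $Y := X\times_{\mathbf P^2} B$ and resolving, one obtains a surface $Y$ with finite morphisms $Y\to X$ and $Y\to B$. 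The second morphism, being finite onto a ball quotient, exhibits an étale cover $Y'$ of $Y$ as a (ramified) cover of the ball $\Delta_3$, hence $Y'$ maps holomorphically and finitely to $\Delta_3$; its image is then a closed complex submanifold of the ball, i.e. Stein. One must arrange things so that the covering $\tilde Y'\to B$ lifts to an embedding $Y'\hookrightarrow \Delta_3$ rather than merely a finite map — this is where the étale-cover flexibility is needed, since passing to a deep enough finite cover one can trivialize the monodromy obstruction to lifting the immersion into the ball.

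\medskip

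\textbf{The main obstacle} will be the passage from "finite morphism to a ball quotient" to "étale cover embeds in the ball". A finite cover of a ball quotient $B$ is a surface whose universal cover is still the ball, but a generic such cover does not obviously embed equivariantly, and ramified covers $Y\to B$ need not have Stein universal cover at all unless the ramification is handled carefully. The fix is to take $Y$ smooth (resolve the fiber product) and then observe that $Y$ itself, being a smooth surface with a finite map to $B$, may fail to be a ball quotient, but an appropriate étale cover $Y'$ — chosen so that the composite $Y'\to B\to \Delta_3$ becomes an unramified local isomorphism onto its image — will have universal cover an open subset of $\Delta_3$, and a closed complex submanifold of a bounded domain is Stein. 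Verifying that such a $Y'$ exists amounts to a monodromy computation for the map $\pi_1(Y)\to \pi_1(B) = \Gamma \subset PU(2,1)$, intersected with the local structure of the branch locus, and this is the technical heart of the argument; everything else (existence of $B\to\mathbf P^2$, finiteness of fiber products, resolution) is standard.
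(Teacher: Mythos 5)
Your overall setup matches the paper's: take a ball quotient $Z$, generic finite projections $X\to\mathbf P^2$ and $Z\to\mathbf P^2$ with transverse branch loci, form $Y:=X\times_{\mathbf P^2}Z$ (smooth), and then pass to an \'etale cover obtained by pulling back along the universal cover $p:\mathbf B^2\to Z$. The opening reduction to general type, and the appeal to the product-of-curves results of Sections II--III, is a detour the paper does not need: the argument works for every smooth projective surface without distinguishing Kodaira dimension.

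The genuine gap is in the embedding step, which is the only non-formal point. First, a compact ball quotient $B$ does \emph{not} ``sit inside the $3$-ball tautologically''; only its universal cover $\mathbf B^2$ does, and $B$ is compact while the intended $Y'$ is not. Second, the \'etale cover that works is $Y':=p^*Y=Y\times_Z\mathbf B^2$, which comes with a \emph{finite ramified} map $q:Y'\to\mathbf B^2$ onto the $2$-ball, not onto a $3$-ball, and $q$ is surjective of degree $>1$ — so its ``image'' is all of $\mathbf B^2$ and cannot itself be the required Stein submanifold. Third, no passage to a deeper \'etale cover will make $q$ unramified or an immersion onto its image: the ramification is the geometric branch locus pulled back from $X\to\mathbf P^2$ and does not disappear under \'etale base change. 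There is no ``monodromy obstruction to lifting the immersion into the ball'' to trivialize. The correct resolution is the one-line observation that a finite ramified cover $q:Y'\to\mathbf B^2$ admits a bounded holomorphic function $h$ separating the sheets, so that $(q,h):Y'\hookrightarrow\mathbf B^2\times\Delta$ is a closed embedding; since $\mathbf B^2\times\Delta$ sits in a $3$-dimensional ball after rescaling, $Y'$ is a Stein submanifold of a $3$-ball. This ``use the extra dimension'' trick is exactly the missing idea in your write-up, and without it the argument does not close.
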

\begin{proof}
 Let $Z$ be a ball quotient, consider projections $X\to \mathbf P^2$,
 $Z\to \mathbf P^2$, and let $Y:=X\times_{\mathbf P^2} Z$. For sufficiently generic 
 projections, the branch loci in $\mathbf P^2$ are transverse, hence $Y$ is smooth.
 Let $p:\mathbf B^2\to Z$ be the universal cover, then $Y':=p^*Y$ is a complex
 manifold which is naturally a finite ramified cover of the ball $\mathbf B^2$,
 and as such embeds into a product $\mathbf B^2\times \Delta$.
\end{proof}

We wish to conclude by turning our attention to the corresponding problem in
characteristic $p>0$, of understanding smooth foliations on algebraic
surfaces. Perhaps not too surprisingly, the situation is drastically 
different from the complex case, and it turns out that it is extremely simple
to construct a covering of a given surface that carries a smooth foliation.
Before proceeding to our main result, let us recall a key difference between 
foliations in characteristic $0$ and $p$, that lies in the notion of
integrability: the celebrated Frobenius integrability theorem, as well known
for integrable distributions over $\mathbf C$, fails over fields of positive
characteristic: integrable distributions need not have a formal first integral.
Observe, indeed, that the notion of leaf is rather badly behaved, and the closest
we can get to formal integrability is $p$-closedness - that is, the algebra
generated by the vector fields defining our foliation is closed under $p$-powers.
This implies that the kernel of such algebra of differential operators
defines a factorization of the Frobenius morphism on the ambient variety. And
a modicum of thought shows that this is the best integrability condition one can hope
for. In practice, probably the quickest way of appreciating the role of
$p$-closedness in remedying the failure of formal integrability is:
\begin{fact}\cite [II.1.6]{13}\label{Mac}
Let $A$ be a complete regular local ring over an algebraically
closed field of characteristic $p$, and let $\partial$ be a non-singular
derivation of $A$. Then there exists a regular system $x,y_1,...,y_n$
of parameters such that 
$$\partial= \frac{\partial}{\partial x}+ x^{p-1}\sum_{i=1}^n f_i(x^p,y)\frac{\partial}{\partial y_i}$$
Moreover the ideal defining the vanishing of $\partial\wedge \partial^p$ is generated
by $f_1,...,f_n$. In particular $\partial$ is $p$-closed iff $\partial=\frac{\partial}{\partial x}$,
i.e. a smooth foliation by curves is $p$-closed iff it is formally integrable.
\end{fact}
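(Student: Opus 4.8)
The plan is to prove this by a formal normal‑form (``flow box'') argument adapted to characteristic $p$, followed by a direct computation of $\partial^p$.

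\smallskip
\emph{Normalisation.} Since the foliation generated by $\partial$, its $p$-closedness, and the ideal of $\partial\wedge\partial^p$ all depend on $\partial$ only up to multiplication by a unit (if $\partial'=u\partial$ then $\partial'^p=u^p\partial^p+c\,\partial$ for some $c\in A$, hence $\partial'\wedge\partial'^p=u^{p+1}\,\partial\wedge\partial^p$), I may first rescale. Non-singularity provides $x\in\mathfrak m$ with $\partial x\notin\mathfrak m$; since $\partial(\mathfrak m^2)\subseteq\mathfrak m$ this forces $x\notin\mathfrak m^2$, so $x$ belongs to a regular system of parameters, and replacing $\partial$ by $(\partial x)^{-1}\partial$ I may assume $\partial x=1$. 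Completing $x$ to a regular system $x,z_1,\dots,z_n$ and subtracting from each $z_i$ the appropriate $k$-multiple of $x$, I may further assume $\partial z_i\in\mathfrak m$, so that in these coordinates $\partial=\partial/\partial x+\sum_{i=1}^n c_i\,\partial/\partial z_i$ with $c_i\in\mathfrak m$.

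\smallskip
\emph{Straightening.} Over $\mathbf{C}$ one would integrate the vector field to remove the $c_i$ entirely; in characteristic $p$ this fails precisely because $\partial/\partial x\colon k[[x]]\to k[[x]]$ is not surjective — its image omits the monomials $x^{p-1},x^{2p-1},\dots$, so its cokernel is $x^{p-1}k[[x^p]]$. The idea is to run the straightening regardless, homogeneous degree by degree: assuming inductively that all terms of order $<d$ in the coefficients $\partial z_i$ already have the shape $x^{p-1}\cdot(\text{monomial in }x^p\text{ and the }z_j)$, replace $z_i$ by $z_i+\varphi_i$ with $\varphi_i$ homogeneous of degree $d+1$; since $\partial\varphi_i\equiv\partial\varphi_i/\partial x$ modulo order $d+1$, one absorbs the part of the degree-$d$ term lying in the image of $\partial/\partial x$ and is left with a residue in $x^{p-1}k[[x^p,z]]$, while lower degrees are untouched. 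As $A$ is complete these substitutions converge to a genuine change of regular system, yielding $\partial=\partial/\partial x+x^{p-1}\sum_i f_i(x^p,y)\,\partial/\partial y_i$ in the limiting coordinates $x,y_1,\dots,y_n$. I expect this step to be the main obstacle: one must verify that the successive coordinate changes neither spoil the ``function of $x^p$ and $y$ only'' structure of the lower-order terms nor obstruct convergence — this is the bookkeeping behind the ``trivial local computation'' advertised in the introduction.

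\smallskip
\emph{Computing $\partial^p$.} Write $D=\partial/\partial x$ and $E=\sum_i f_i(x^p,y)\,\partial/\partial y_i$, so $\partial=D+x^{p-1}E$. Three elementary observations drive the rest: $D^p=0$ as a derivation of $k[[x,y]]$ (a product of $p$ consecutive integers is divisible by $p$); $D\bigl(f_i(x^p,y)\bigr)=0$, whence $[D,E]=0$; and $D(x^{p-1})=-x^{p-2}$. An easy induction on $k$ then gives $\partial^k(y_i)=(-1)^{k-1}(k-1)!\,x^{p-k}f_i+\rho_{i,k}$, where $\rho_{i,k}$ has $x$-adic order $\ge 2p-k$ and lies in the ideal $(f_1,\dots,f_n)$, while $\partial x=1$ forces $\partial^p(x)=0$. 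Taking $k=p$ and invoking Wilson's theorem, $(-1)^{p-1}(p-1)!\equiv-1\pmod p$, so $\partial^p(y_i)\equiv-f_i$ modulo $\mathfrak m\cdot(f_1,\dots,f_n)$. Now $\partial\wedge\partial^p$, written in the frame $\partial/\partial x,\partial/\partial y_1,\dots,\partial/\partial y_n$, has coefficient ideal generated by the $2\times 2$ minors of the coefficient vectors of $\partial$ and $\partial^p$; since their $\partial/\partial x$-entries are $1$ and $0$, those minors reduce to $\partial^p(y_1),\dots,\partial^p(y_n)$, the remaining ones being $A$-linear combinations of these. Hence the ideal of $\partial\wedge\partial^p$ is $(\partial^p(y_1),\dots,\partial^p(y_n))$, and the congruence above together with Nakayama's lemma upgrades this to the equality $(\partial^p(y_1),\dots,\partial^p(y_n))=(f_1,\dots,f_n)$. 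Finally, for a smooth foliation by curves, $p$-closedness means exactly $\partial^p\in A\,\partial$; since $\partial$ is non-singular this is equivalent to $\partial\wedge\partial^p\equiv0$, hence to $(f_1,\dots,f_n)=0$, i.e. $\partial=\partial/\partial x$ — which is the last assertion of the Fact.
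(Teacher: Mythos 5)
The paper itself offers no proof of this statement: Fact~\ref{Mac} is quoted verbatim from McQuillan's \emph{Canonical models of foliations}, [13] II.1.6, so there is no in-paper argument to compare against. Your reconstruction is essentially the standard one (formal flow-box normal form, then a direct computation of $\partial^p$), and it is correct, with one imprecision worth flagging.

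The normalisation and straightening steps are fine. Rescaling $\partial$ is justified by Hochschild's formula $(u\partial)^p=u^p\partial^p+\bigl((u\partial)^{p-1}u\bigr)\partial$, which gives $\partial'\wedge\partial'^p=u^{p+1}\partial\wedge\partial^p$ as you say. The degree-by-degree straightening converges because each substitution $z_i\mapsto z_i+\varphi_i$ with $\varphi_i$ homogeneous of degree $d+1$ perturbs the coefficients only in degree $\ge d+1$ (the corrections involve $c_j\,\partial\varphi_i/\partial z_j$ with $c_j\in\mathfrak m$, and re-expressing lower-degree terms in the new coordinates also shifts only by degree $\ge d+1$); this is the bookkeeping you anticipated and it does go through.

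The one real imprecision is in the induction on $\partial^k(y_i)$. You claim $\rho_{i,k}$ has $x$-adic order $\ge 2p-k$ \emph{and} lies in $(f_1,\dots,f_n)$, and then conclude at $k=p$ that $\rho_{i,p}\in\mathfrak m\cdot(f_1,\dots,f_n)$. But ``order $\ge p$ and in $(f_1,\dots,f_n)$'' does not by itself imply membership in $\mathfrak m\cdot(f_1,\dots,f_n)$ (e.g.\ $f_1=x$, $\rho=x$). The fix is to strengthen the inductive claim to $\rho_{i,k}\in x^{2p-k}\,(f_1,\dots,f_n)$, which is what actually holds: writing $D=\partial/\partial x$, $E=\sum f_i\partial/\partial y_i$, and $F=k[[x^p,y]]$, one has $D|_F=0$ and $E(F)\subseteq F$, so an easy induction on the recursion $\partial\bigl(x^{jp-k}E^{j-1}f_i\bigr)=-k\,x^{jp-(k+1)}E^{j-1}f_i+x^{(j+1)p-(k+1)}E^{j}f_i$ gives the closed form
$$\partial^k(y_i)=\sum_{j=1}^{k}c_{k,j}\,x^{jp-k}\,E^{j-1}(f_i),\qquad c_{k,1}=(-1)^{k-1}(k-1)!,$$
and for $j\ge2$ each $E^{j-1}(f_i)\in(f_1,\dots,f_n)$ while $x^{jp-k}$ has order $\ge 2p-k\ge p\ge1$ when $k\le p$. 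With that sharpening, your Nakayama step and the identification of the ideal of $\partial\wedge\partial^p$ with $(\partial^p y_1,\dots,\partial^p y_n)=(f_1,\dots,f_n)$ are correct, as is the final equivalence $p$-closed $\iff\partial\wedge\partial^p=0\iff(f_1,\dots,f_n)=0\iff\partial=\partial/\partial x$.
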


\begin{subsection}{Proof of Theorem \ref{charpfoliations}.}

Let $p:Y_0\to \mathbf P^1_k$ be a general Lefschetz pencil in $X$, whose general member
is smooth, and whose singular members have exactly one node.
Observe that $Y_0$ is obtained by blowing up $X$ along the base points of the pencil.
Let $S\subset \mathbf P^1_k$ parametrize the singular fibers,
and for each $s\in S$ let $y_s\in Y_s$ be the node in the fiber.
Let $H$ be a very ample divisor, and consider the sheaf
$\mathscr O(pH)\otimes I_{\cup_s y_s}$ whose local sections are
those of $\mathscr O(pH)$ vanishing along $\cup_s y_s$.
Its generic global section defines a curve, $D$, which is smooth, transverse to
the branches of $Y_s$ at $y_s$, and has simple tangencies with the fibers of
$p$ outside their nodes. The following claim will conclude:\\
\begin{claim}
 Let 
 $$r_D:Y:=Y_0(\sqrt[p]{D})\to Y_0$$ denote the inseparable cyclic $p$-cover, branched 
 along $D$. If $\mathscr F$ denotes the foliation defined
 by $p$, then the saturation of $r_D^*\mathscr F$
 is smooth and $p$-closed.
\end{claim}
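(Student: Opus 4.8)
The plan is to verify both properties by a purely local computation at each closed point of $Y$. I take as given from the construction that $Y$ is a smooth surface --- this is the one point that is not elementary algebra: the cover $z^{p}=f$ over $Y_{0}$, with $f$ a local equation of $D$, is singular exactly along the zero scheme of the section $df$ of $\Omega^{1}_{Y_{0}}\otimes\mathscr O(pH)$ (in characteristic $p$ this \emph{is} a well-defined global section, the transition factors of $\mathscr O(pH)$ being $p$-th powers), and the genericity of $D$, which is smooth and meets the exceptional curves of $Y_{0}\to X$ only at general points, is invoked to make it nowhere zero. Write $\mathscr F=\ker\omega$, where $\omega$ is the twisted $1$-form defining the foliation of the pencil $p$: in local coordinates $(u,v)$ at a node, where $p$ is the product $uv$, we have $\omega=d(uv)=u\,dv+v\,du$, and at a point where $p$ is a submersion we take a coordinate $x$ on the base with $p=x$ and $\omega=dx$. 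By definition the saturation of $r_{D}^{*}\mathscr F$ is the kernel of $r_{D}^{*}\omega$ after its divisorial zeros are divided out. The claim will follow once I show that, everywhere locally on $Y$, this saturated form is a unit multiple of a coordinate differential; for then the saturation of $r_{D}^{*}\mathscr F$ is $\langle\partial/\partial z\rangle$ for a local coordinate $z$ on $Y$, a foliation that is smooth and, since $(\partial/\partial z)^{p}=0$ on a complete regular local ring of characteristic $p$, also $p$-closed --- this last being exactly the integrability criterion of Fact \ref{Mac}.

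The crux, and the only place where $\cha k=p>2$ enters, is the computation over a node $y_{s}$. There $Y_{0}\cong\Spec k[[u,v]]$ with $p=uv$, so $\omega=u\,dv+v\,du$, and $D=\{g=0\}$ with $g=au+bv+(\mathrm{higher\ order})$, $ab\neq0$, reflecting the transversality of $D$ to the two branches $\{u=0\},\{v=0\}$ of the nodal fibre. The cover is $Y=\Spec k[[u,v,z]]/(z^{p}-g)$; since $g_{u}$ is a unit one may solve $u=U(v,z^{p})$, a power series in $v$ and $z^{p}$, with $U(v,0)=\gamma(v)$ the equation of $D$ as a graph $u=\gamma(v)$, so $\gamma(0)=0$ and $\gamma'(0)=-b/a\neq0$, and $(v,z)$ become coordinates on the smooth surface $Y$. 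Using $d(z^{p})=0$ in $\Omega^{1}_{Y}$, pulling back gives
\[
r_{D}^{*}\omega\;=\;d\bigl(v\cdot U(v,z^{p})\bigr)\;=\;(U+vU_{v})\,dv ,
\]
with identically vanishing $dz$-component. The coefficient $U+vU_{v}$ vanishes at the origin, and its linear part along $v$ equals $\gamma'(0)+\gamma'(0)=2\gamma'(0)$, nonzero because $p>2$; hence $\{U+vU_{v}=0\}$ is a smooth, reduced divisor through the point, and dividing it out of $r_{D}^{*}\omega$ leaves the coordinate differential $dv$. So the saturated pullback is $\langle\partial/\partial z\rangle$: smooth and $p$-closed. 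In characteristic $2$ the linear part of $U+vU_{v}$ vanishes and control of the divisorial factor is lost, which is the degeneration the hypothesis rules out.

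The remaining points need only mild variants. At a point of $D$ not lying over a node --- where $p$ is a submersion --- take $(x,y)$ on $Y_{0}$ with $p=x$, so $\mathscr F=\ker(dx)$ and $D=\{f=0\}$; since $D$ is transverse to the fibres of $p$ away from its finitely many simple tangencies, one of $f_{x},f_{y}$ is a unit, and the same manipulation --- eliminating that variable and using $d(z^{p})=0$ --- writes $r_{D}^{*}(dx)=h\,dy$ in coordinates $(y,z)$ on $Y$, with $h$ a unit at the transversality points and vanishing along a smooth divisor at the simple tangencies; dividing out $h$ gives $\ker(dy)=\langle\partial/\partial z\rangle$. At a point lying on neither $D$ nor a node, $\mathscr F$ is already smooth and $f$ is a unit, so $r_{D}^{*}(dx)$ is once more a unit times a coordinate differential. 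Collecting the three cases proves that the saturation of $r_{D}^{*}\mathscr F$ is everywhere locally $\langle\partial/\partial z\rangle$, which finishes the proof.

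The step I expect to be the real obstacle is not the foliation computation --- which, as the paper advertises, is a one-line power-series identity once the local models are fixed, the single delicate point inside it being the non-vanishing of $2\gamma'(0)$ at the nodes --- but rather the genericity argument guaranteeing that $D$ can be chosen so that the cover $Y$ is itself smooth.
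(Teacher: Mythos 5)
Your proposal follows the same strategy as the paper's proof: compute $r_D^*\omega$ in local coordinates, exploit $d(z^p)=0$ to see the $dz$-component vanish identically, and conclude that the saturation is a coordinate differential, hence smooth and --- via formal integrability and Fact \ref{Mac}, or equivalently $(\partial/\partial z)^p=0$ --- $p$-closed. Your nodal and tangency computations agree, up to notation, with (i) and (ii) in the paper. Two points of comparison are worth making.

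First, the step where you compute the linear part of $U+vU_v$ and invoke $p>2$ to conclude that its zero scheme is a \emph{smooth} divisor is superfluous. Since $r_D^*\omega=(U+vU_v)\,dv + 0\cdot dz$ has identically zero $dz$-coefficient, the gcd of the coefficients is $U+vU_v$ itself, and saturation always yields $dv$ --- whether or not $\{U+vU_v=0\}$ is smooth. The paper, correctly, invokes $p>2$ only at the simple tangencies, where it is genuinely needed: there one must rule out $r_D^*\omega=dg/dx\,dx$ being the zero form, which would leave nothing to saturate.

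Second, you flag --- rightly --- that the claim implicitly requires control on the smooth locus of $Y$, and you observe that $Y$ is singular exactly along the zero scheme of the section $ds$ of $\Omega^1_{Y_0}\otimes\mathscr O(pH)$. But the assertion that genericity of $D$ makes this section nowhere vanishing is not correct: that section has expected zero locus of dimension $0$, with $c_2(\Omega^1_{Y_0}\otimes\mathscr O(pH))$ zeros, and this Chern number is positive for $H$ very ample, so a generic choice of $D$ leaves finitely many singular points of $Y$ away from $D$. (Smoothness of $D$ gives $ds\neq 0$ along $D$, but says nothing off $D$.) The paper's proof is silent on what happens at those points, restricting its local analysis to nodes and tangencies, so the claim as stated should be read as concerning the smooth locus of $Y$; your attempt to strengthen this to ``$Y$ is smooth'' does not succeed as written.
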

\begin{proof}
First we deal with smoothness.
We need to worry about what happens at:
\begin{itemize}
 \item[(i)] The nodes in the fibers of $p$, and
 \item[(ii)] The simple tangencies between $D$ and the smooth locus of the fibers of $p$.
\end{itemize}
Let us consider (i). In the local ring of $Y_0$ completed in a node in a singular fiber, the branches of
the singular fiber give us local coordinates $x,y$, while
$D$ is defined by the vanishing of a third local function, $z$.
$\mathscr F$ is defined by the vanishing of the form $d(xy)$, and
our assumptions on $D$ imply that there exists a formal function
$G$ such that $y=G(x,z)$ holds.
The cyclic cover $r_D$ is defined by $r_D^*z=z^p$, hence
$$r^*d(xy)=d(x\cdot G(x,z^p))=(G+x\frac{\partial G}{\partial x}) dx$$
and therefore, upon saturation, $r_D^*\mathscr F$ is smooth
around the pre-image, under $r_D$, of nodes in the fibers of $p$.\\
Let us deal with (ii).
In the local ring of $Y_0$ completed in a point of simple tangency, we can pick
local coordinates $x,y$ such that
our fibration is defined by the vanishing of the form $dy$.
If the vanishing of $z$ defines $D$ in such coordinates, simple tangency means that
there exists a formal function $g$ such that $z=y-g(x)$
and $\ord_x g(x)=2$.\\
\textit{In particular, $\frac{dg(x)}{dx}$ is not identically zero, since $p>2$}.\\
The cyclic cover $r_D$ is defined by
$$r_D^*z=z^p,\ r_D^*x=x,\ r_D^*y=z^p+g(x)$$
and the pullback foliation is then
$$r_D^*dy=d(z^p+g(x))=\frac{dg(x)}{dx}dx$$
which is again smooth upon saturation.\\
Observe, as an output of our local computations, that
the saturation of $r_D^*\mathscr F$ is not only smooth, but
also formally integrable. By Fact \ref{Mac}, it is $p$-closed.
\end{proof}
\end{subsection}
\end{section}

\end{document}